\documentclass[12pt]{amsart}
\usepackage{amssymb}
\usepackage[all]{xy}

\textwidth=160mm
\textheight=200mm
\topmargin=20mm
\hoffset=-20mm

\newtheorem{theorem}{Theorem}[section]
\newtheorem{definition}[theorem]{Definition}
\newtheorem{proposition}[theorem]{Proposition}
\newtheorem{conjecture}[theorem]{Conjecture}

\begin{document}

\title{Quantum groups from stationary matrix models}

\author{Teodor Banica}
\address{T.B.: Department of Mathematics, Cergy-Pontoise University, 95000 Cergy-Pontoise, France. {\tt teodor.banica@u-cergy.fr}}

\begin{abstract}
We study the quantum groups appearing via models $C(G)\subset M_K(C(X))$ which are ``stationary'', in the sense that the Haar integration over $G$ is the functional $tr\otimes\int_X$. Our results include a number of generalities, notably with a substantial list of examples, and a detailed discussion in the quantum permutation group case.
\end{abstract}

\subjclass[2010]{60B15 (81R50)}
\keywords{Quantum permutation, Matrix model}

\maketitle

\section*{Introduction}

The compact quantum groups were introduced by Woronowicz in \cite{wo1}, \cite{wo2}, via a set of fairly simple and general axioms. Technically speaking, the theory is based on a good existence result for the Haar measure. To be more precise, the Haar integration can be constructed by starting with any faithful positive linear functional $\varphi\in C(G)^*$, and then by taking the Ces\`aro limit of the corresponding convolution powers:
$$\int_G=\lim_{k\to\infty}\frac{1}{k}\sum_{r=1}^k\varphi^{*r}$$

With this result in hand, a full generalization of the Peter-Weyl theory, and of Tannaka-Krein duality, can be developed, and this was done in \cite{wo1}, \cite{wo2}. Some simplifications, making the general theory even simpler, are now available from \cite{mvd}, \cite{mal}.

Generally speaking, the compact quantum groups are expected to be of help in quantum physics. The effects of quantum mechanics can be felt starting at the nanoscale level, and there is a wide array of situations where quantum groups of symmetries can appear. The potential fields of investigation include 2D lattice models, conformal field theory, quantum chromodynamics, and quantum information theory. However, the whole development here has been very slow, and all this remains to be confirmed. Looking back at the story of the subject, there are in fact two main reasons for this slow development:
\begin{enumerate}
\item Woronowicz's theory, while using mathematics from the 50s, was developed only in the 80s. Thus, we have here a 30-year gap, to be overcome.

\item The work so far in the area, accounting for another 30 years, has been mainly theoretical, or at least not applied enough.
\end{enumerate}

Summarizing, the compact quantum groups are a quite natural and beautiful theory, which remains however in need of a considerable amount of work.

In principle, the potential power of the compact quantum groups should come from their integration theory. Thanks to the efforts of many people, the knowledge here has substantially evolved since Woronowicz's original findings in \cite{wo1}, \cite{wo2}, and can be even labeled as reasonably advanced, among various noncommutative integration theories. There are two main ideas here, which have been systematically exploited:
\begin{enumerate}
\item Algebraic geometry. In analogy with the classical case, a Weingarten integration formula, based on Schur-Weyl duality, can be developed. This requires a good knowledge of the relations satisfied by the standard coordinates $u_{ij}\in C(G)$.

\item Random matrices. This is an alternative method, based on matrix models for the standard coordinates $u_{ij}$. Once a matrix model for $C(G)$ is found, with suitable faithfulness properties, the Haar integration can be recovered from it. 
\end{enumerate}

The aim of the present paper is to bring some new results in connection with (2) above. Given an inner faithful model $\pi:C(G)\to M_K(C(X))$, it is known from \cite{bfs}, \cite{wa2} that the Haar functional of $G$ can be recovered via a Ces\`aro limit, as follows:
$$\int_G=\lim_{k\to\infty}\frac{1}{k}\sum_{r=1}^k\left[\left(tr\otimes\int_X\right)\pi\right]^{*r}$$

We will study here the quantum groups appearing via models $C(G)\subset M_K(C(X))$ which are ``stationary'', in the sense that the convergence in the above Ces\`aro limiting result is stationary. This latter condition, which justifies the terminology, is in fact equivalent to the fact that the Haar integration is simply the random matrix trace: 
$$\int_G=tr\otimes\int_X$$

Observe that that our assumption $C(G)\subset M_K(C(X))$ implies that the dual quantum group $\Gamma=\widehat{G}$ must be amenable. This is of course quite restrictive. However, and here comes our main point, under this amenability assumption, the stationarity condition does not seem to be very restrictive. We have many examples, including:
\begin{enumerate}

\item The compact Lie groups, $G\subset U_N$. We can indeed take here $K=1$, and use the identity embedding $C(G)\subset M_1(C(G))$.

\item The finite quantum groups, $|G|<\infty$. Indeed, we can take here $K=|G|$, and use the left regular representation $\pi:C(G)\subset M_K(\mathbb C)$.

\item The half-classical orthogonal compact quantum groups, $G\subset O_N^*$. Here we can take $K=2$, and use the results in \cite{bdu}.

\item Certain classes of duals of amenable groups, $G=\widehat{\Gamma}$. Here the results are more technical, and will be explained in this paper.

\item Some key examples coming from the Pauli matrices, and their generalizations, the Weyl matrices, constructed in \cite{bco}, \cite{bne}.

\item A number of other examples in the quantum permutation group case, $G\subset S_N^+$, inspired from \cite{bne}, that we will discuss in this paper.
\end{enumerate}

In view of this list, there is some theory to be developed, with a lot of work to be done. Our results here, following the recent work in \cite{bne}, will include a number of generalities on the stationary models, then a detailed discussion of the above list of examples, notably with a proof of stationarity for the Weyl matrix models, and the discussion of some universal constructions, in the quantum permutation group case. 

As a conclusion, the present work aims to ``absorb'' large classes of quantum groups into random matrix theory. There are many questions here:
\begin{enumerate}
\item A first question regards the full understanding of the discrete group case. Here we have only very partial results, regarding the half-abelian case.

\item At the combinatorial and probabilistic level, there are some connections with the 2D lattice models (Ising, Potts). This remains to be clarified.

\item Regarding the non-stationary case, one question is that of unifying the present work with that in \cite{ban}, \cite{bb2}, \cite{bic}, on the deformed Fourier models.

\item Finally, one may wonder if all this can help in connection with the classification problem for the easy quantum groups \cite{fre}, \cite{rwe}, \cite{twe}. We do not know.
\end{enumerate}

The paper is organized as follows: 1-2 contain preliminaries and basic results, in 3-4 we discuss the models coming from the Weyl matrices, with a proof of their stationarity, and in 5-6 we study the universal flat models for $S_N^+$ and its subgroups.

\medskip

\noindent {\bf Acknowledgements.} I would like to thank Julien Bichon, Uwe Franz, Ion Nechita and Adam Skalski for several useful discussions. 

\section{Matrix models}

We will be interested in what follows in certain compact matrix quantum groups whose Haar functional appears as a random matrix trace. Since the traciality of the Haar functional corresponds to the Kac algebra assumption $S^2=id$, best is to use Woronowicz's compact quantum group formalism in \cite{wo1}, \cite{wo2}, with the extra axiom $S^2=id$. 

The precise definition that we will need is as follows:

\begin{definition}
Assume that $A$ is a $C^*$-algebra, and $u\in M_N(A)$ is a unitary matrix, such that the following formulae define morphisms of $C^*$-algebras:
$$\Delta(u_{ij})=\sum_ku_{ik}\otimes u_{kj}\quad,\quad\varepsilon(u_{ij})=\delta_{ij}\quad,\quad S(u_{ij})=u_{ji}^*$$
We write then $A=C(G)$, and call $G$ a compact matrix quantum group.
\end{definition}

The above maps $\Delta,\varepsilon,S$ are called comultiplication, counit and antipode. The basic examples include the compact Lie groups $G\subset U_N$, their $q$-deformations at $q=-1$, and the duals of the finitely generated discrete groups $\Gamma=<g_1,\ldots,g_N>$. See \cite{ntu}, \cite{wo1}.

Regarding now the matrix models, we use here:

\begin{definition}
A matrix model for $C(G)$ is a morphism of $C^*$-algebras 
$$\pi:C(G)\to M_K(C(X))$$
with $X$ being a compact space, and with $K\in\mathbb N$. 
\end{definition}

As a basic example, in the case where $G=\widehat{\Gamma}$ is the abstract dual of a discrete group, in the sense that we have $C(G)=C^*(\Gamma)$, such a matrix model $\pi:C^*(\Gamma)\to M_K(C(X))$ must come from a group representation $\rho:\Gamma\to C(X,U_K)$.

In the context of this latter example, observe that when $\rho$ is faithful, the induced representation $\pi$ is in general not faithful, its target algebra being finite dimensional. On the other hand, this representation ``reminds'' $\Gamma$. We say that $\pi$ is inner faithful.

We have in fact the following notions, coming from \cite{bb1}:

\begin{definition}
Let $\pi:C(G)\to M_K(C(X))$ be a matrix model. 
\begin{enumerate}
\item The Hopf image of $\pi$ is the smallest quotient Hopf $C^*$-algebra $C(G)\to C(H)$ producing a factorization of type $\pi:C(G)\to C(H)\to M_K(C(X))$.

\item When the inclusion $H\subset G$ is an isomorphism, i.e. when there is no non-trivial factorization as above, we say that $\pi$ is inner faithful.
\end{enumerate}
\end{definition}

Observe that when $G=\widehat{\Gamma}$ is a group dual, $\pi$ must come from a group representation $\rho:\Gamma\to C(X,U_K)$, and the above factorization is the one obtained by taking the image, $\rho:\Gamma\to\Gamma'\subset C(X,U_K)$. Thus $\pi$ is inner faithful when $\Gamma\subset C(X,U_K)$.

Also, given a compact group $G$, and elements $g_1,\ldots,g_K\in G$, we have a representation $\pi:C(G)\to\mathbb C^K$, given by $f\to(f(g_1),\ldots,f(g_K))$. The minimal factorization of $\pi$ is then via $C(G')$, with $G'=\overline{<g_1,\ldots,g_K>}$, and $\pi$ is inner faithful when $G=G'$.

We refer to \cite{bb1}, \cite{bcv}, \cite{chi} for more on these facts, and for a number of related algebraic results. In what follows, we will rather use analytic techniques. Assume indeed that $X$ is a probability space. We have then the following result, from \cite{bfs}, \cite{wa2}:

\begin{proposition}
Given an inner faithful model $\pi:C(G)\to M_K(C(X))$, we have
$$\int_G=\lim_{k\to\infty}\frac{1}{k}\sum_{r=1}^k\int_G^r$$
where $\int_G^r=(\varphi\circ\pi)^{*r}$, with $\varphi=tr\otimes\int_X$ being the random matrix trace.
\end{proposition}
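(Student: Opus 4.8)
The plan is to test both functionals on the matrix coefficients of the irreducible corepresentations of $G$, where everything becomes finite-dimensional linear algebra. By Peter--Weyl theory \cite{wo1}, \cite{wo2} the Cesàro means $\frac1k\sum_{r=1}^k\int_G^r$ are states, uniformly bounded in norm, so if they converge on each such coefficient then they converge weak-$*$ on all of $C(G)$; hence it suffices to understand the limit on a fixed irreducible corepresentation $v=(v_{ij})\in M_d(C(G))$.

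First I would record the elementary identity $\int_G^r(v_{ij})=(M_v^r)_{ij}$, where $M_v=(\psi(v_{ij}))_{ij}$ with $\psi=\varphi\circ\pi$: this is immediate from $\Delta(v_{ij})=\sum_k v_{ik}\otimes v_{kj}$, the convolution of functionals turning into matrix multiplication. Since $M_v=(\mathrm{id}\otimes\psi)(v)$ is the image of a unitary under the unital completely positive slice map $\mathrm{id}\otimes\psi$, we have $\|M_v\|\le 1$, and the von Neumann mean ergodic theorem then gives $\frac1k\sum_{r=1}^k M_v^r\to P_v$, the orthogonal projection onto $\ker(1-M_v)$. Thus $\Phi(v_{ij})=(P_v)_{ij}$, where $\Phi$ denotes the Cesàro limit. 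On the other hand $\int_G(v_{ij})=(P^{\mathrm{Haar}}_v)_{ij}$, with $P^{\mathrm{Haar}}_v$ the orthogonal projection onto the fixed space $\mathrm{Fix}(v)=\{\xi:\sum_j v_{ij}\xi_j=\xi_i1\}$. So the whole statement reduces to the single identity $\ker(1-M_v)=\mathrm{Fix}(v)$.

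The inclusion $\mathrm{Fix}(v)\subseteq\ker(1-M_v)$ is trivial: applying $\psi$ to $\sum_j v_{ij}\xi_j=\xi_i1$ gives $M_v\xi=\xi$. The reverse inclusion is the heart of the matter, and the only place where inner faithfulness is used. Here I would pass to the amplified models $\pi_r=\pi^{\otimes r}\circ\Delta^{(r-1)}:C(G)\to M_{K^r}(C(X^r))$, which satisfy $\int_G^r=\varphi_r\circ\pi_r$ with $\varphi_r=tr\otimes\int_{X^r}$ again a random matrix trace. Given $\xi$ with $M_v\xi=\xi$ we have $\langle\xi,M_v^r\xi\rangle=\|\xi\|^2$ for every $r$; writing this out in the GNS representation of $\varphi_r$ with cyclic vector $\Omega_r$, the equality case of Cauchy--Schwarz shows that $\Omega_r\otimes\xi$ is fixed by the GNS image of the unitary $(\mathrm{id}\otimes\pi_r)(v)$. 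The decisive point is that $\varphi_r$, unlike $\psi$ itself, is faithful on the full matrix algebra $M_{K^r}(C(X^r))$; this upgrades the relation at $\Omega_r$ to the genuine algebraic identity $\pi_r\big(\sum_j v_{ij}\xi_j-\xi_i1\big)=0$, valid for all $r$.

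Finally I would invoke the characterization of inner faithfulness from \cite{bb1}, \cite{chi}, namely that $\pi$ is inner faithful precisely when $\bigcap_r\ker\pi_r=\{0\}$: this forces $\sum_j v_{ij}\xi_j=\xi_i1$, i.e. $\xi\in\mathrm{Fix}(v)$, giving $\ker(1-M_v)=\mathrm{Fix}(v)$ and hence $\Phi=\int_G$. I expect the genuine obstacle to be exactly this last transfer: a single $\psi$ is far from faithful on $C(G)$, so the eigenvalue-$1$ condition carries no algebraic force on its own, and the whole weight of the argument rests on routing the relation through the faithful functionals $\varphi_r$ on the matrix targets and then collapsing the tower $(\pi_r)$ via inner faithfulness.
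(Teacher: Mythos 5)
Your analytic skeleton is correct, and it is a genuine proof strategy rather than the paper's approach (the paper offers no proof at all, only citations to \cite{bfs}, \cite{wa2}, \cite{sso}). The reduction to coefficients, the identity $\int_G^r(v_{ij})=(M_v^r)_{ij}$, the mean ergodic theorem for the contraction $M_v$, and the GNS/Cauchy--Schwarz argument via the faithful states $\varphi_r$ are all fine (note that faithfulness of $\varphi_r$ requires the measure on $X$ to have full support; this is an implicit standing assumption, without which the proposition itself is false). These steps correctly reduce the proposition to the algebraic claim that inner faithfulness forces $\bigcap_{r\geq1}\ker\pi_r=\{0\}$. The gap is precisely there. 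The direction you need (inner faithful $\Rightarrow\bigcap_r\ker\pi_r=\{0\}$) is not an innocuous black box: it is essentially \emph{equivalent} to the proposition being proved. Indeed, granting the proposition, any $a\in\bigcap_r\ker\pi_r$ in the dense CQG algebra satisfies $\psi^{*r}(a^*a)=0$ for all $r$, hence $\int_Ga^*a=0$, hence $a=0$ by faithfulness of the Haar state on $Pol(G)$; conversely, your argument derives the proposition from it. For exactly this reason, in the literature this kernel characterization is standardly \emph{deduced} from the Ces\`aro formula, not proved independently; and what \cite{bb1} actually establishes is a Tannakian description of the Hopf image, while the easy implication ($\bigcap_r\ker\pi_r=\{0\}\Rightarrow$ inner faithful) is the one you do not need. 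So, as written, the proof is circular, or at best defers the entire difficulty to a citation that does not straightforwardly contain the needed lemma.

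The argument can be repaired, and in fact shortened, by invoking what is genuinely in \cite{bb1}: the Hopf image of $\pi$ is the compact quantum group whose intertwiner spaces are the spaces $Hom(V_w,V_{w'})$, where $V_w=(id\otimes\pi)(w)$ for words $w$ in $u,\bar{u}$. Inner faithfulness then says that $Hom(V_w,V_{w'})=Hom_G(w,w')$ for all words; in particular $Fix(V_w)=Fix(w)$. Now your GNS/Cauchy--Schwarz argument at $r=1$ alone, with the single faithful state $\varphi$, already gives $\ker(1-M_v)=Fix(V_v)$; and if $v$ is irreducible, choosing a word $w$ containing $v$ and an isometric intertwiner $T\in Hom_G(v,w)$, one gets $V_wT=TV_v$, so $\xi\in Fix(V_v)$ implies $T\xi\in Fix(V_w)=Fix(w)$, whence $v\xi=T^*wT\xi=\xi$. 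Thus $\ker(1-M_v)=Fix(v)$, which is all you need, and the whole tower $\pi_r$, $r\geq2$, becomes unnecessary. With this substitution your proof is correct, and it is an interesting alternative to the idempotent-state route of \cite{bfs}, \cite{wa2}: it trades the theory of idempotent states for Tannakian duality plus elementary Hilbert space arguments.
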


\begin{proof}
This was proved in \cite{bfs} in the case $X=\{.\}$, using idempotent state theory from \cite{fsk}. The general case was recently established in \cite{wa2}. See \cite{sso}.
\end{proof}

The truncated integrals $\int_G^r$ can be evaluated as follows:

\begin{proposition}
Assuming that $\pi:C(G)\to M_K(C(X))$ maps $u_{ij}\to U_{ij}^x$, we have
$$\int_G^ru_{i_1j_1}^{e_1}\ldots u_{i_pj_p}^{e_p}=(T_e^r)_{i_1\ldots i_p,j_1\ldots j_p}$$
where $T_e\in M_{N^p}(\mathbb C)$ with $e\in\{1,*\}^p$ is given by $(T_e)_{i_1\ldots i_p,j_1\ldots j_p}=\int_Xtr(U_{i_1j_1}^{x,e_1}\ldots U_{i_pj_p}^{x,e_p})dx$.
\end{proposition}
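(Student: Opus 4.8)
The plan is to reduce the computation of the convolution power to the defining formula for $\Delta$, and then to recognise the outcome as an ordinary product in $M_{N^p}(\mathbb C)$. I would run the argument by induction on $r$, using throughout the convolution identity $\phi*\psi=(\phi\otimes\psi)\Delta$, together with coassociativity of $\Delta$, which guarantees both that $(\varphi\pi)^{*r}$ is unambiguously defined and that $\int_G^r=\int_G^{r-1}*(\varphi\pi)$.

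For the base case $r=1$ one has $\int_G^1=\varphi\pi$. Since $\pi$ is a $*$-homomorphism, it sends the monomial $u_{i_1j_1}^{e_1}\cdots u_{i_pj_p}^{e_p}$ to the product $U_{i_1j_1}^{e_1}\cdots U_{i_pj_p}^{e_p}$ in $M_K(C(X))$, whose value at $x\in X$ is $U_{i_1j_1}^{x,e_1}\cdots U_{i_pj_p}^{x,e_p}$. Applying $\varphi=tr\otimes\int_X$ then yields $\int_X tr(U_{i_1j_1}^{x,e_1}\cdots U_{i_pj_p}^{x,e_p})\,dx$, which is by definition exactly $(T_e)_{i_1\ldots i_p,j_1\ldots j_p}=(T_e^1)_{i_1\ldots i_p,j_1\ldots j_p}$.

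For the inductive step I would expand $\Delta$ on the monomial. As $\Delta$ is a $*$-homomorphism satisfying $\Delta(u_{ab}^e)=\sum_c u_{ac}^e\otimes u_{cb}^e$ for $e\in\{1,*\}$, multiplicativity gives
$$\Delta(u_{i_1j_1}^{e_1}\cdots u_{i_pj_p}^{e_p})=\sum_{k_1,\ldots,k_p}\big(u_{i_1k_1}^{e_1}\cdots u_{i_pk_p}^{e_p}\big)\otimes\big(u_{k_1j_1}^{e_1}\cdots u_{k_pj_p}^{e_p}\big).$$
Applying $\int_G^{r-1}$ to the first leg and $\varphi\pi$ to the second, the inductive hypothesis and the base case turn this into
$$\int_G^r\big(u_{i_1j_1}^{e_1}\cdots u_{i_pj_p}^{e_p}\big)=\sum_{k_1,\ldots,k_p}(T_e^{r-1})_{i_1\ldots i_p,k_1\ldots k_p}\,(T_e)_{k_1\ldots k_p,j_1\ldots j_p},$$
and this is precisely the $(i_1\ldots i_p,\,j_1\ldots j_p)$ entry of $T_e^{r-1}T_e=T_e^r$, once the multi-index $(k_1,\ldots,k_p)$ is read as a single row/column label of $M_{N^p}(\mathbb C)$.

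The steps here are all elementary; what needs care is purely bookkeeping, and I do not expect any genuine obstacle. First, I would make sure the exponents $e_\alpha$ stay attached to the correct tensor leg and matrix factor throughout — this is automatic, since both $\Delta$ and $\pi$, being $*$-homomorphisms, commute with the operation $e\mapsto$ adjoint. Second, I would fix the identification of the summation index $(k_1,\ldots,k_p)$ with the index of $M_{N^p}(\mathbb C)$ in the same order used in the definition of $T_e$, so that the summation genuinely reads as the matrix product $T_e^{r-1}T_e$. With these conventions pinned down consistently, the induction closes immediately.
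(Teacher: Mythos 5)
Your proof is correct and takes essentially the same route as the paper: the paper's own proof is just the one-line remark that the statement follows from $\phi*\psi=(\phi\otimes\psi)\Delta$ and $\Delta(u_{ij})=\sum_ku_{ik}\otimes u_{kj}$, and your induction on $r$ is precisely the detailed expansion of that remark, including the two bookkeeping points (the exponents $e_\alpha$ passing through $\Delta$ and $\pi$, and the multi-index $(k_1,\ldots,k_p)$ realizing the matrix product) that the paper leaves implicit.
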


\begin{proof}
This follows indeed from the definition of the various objects involved, namely from $\phi*\psi=(\phi\otimes\psi)\Delta$, and from $\Delta(u_{ij})=\sum_ku_{ik}\otimes u_{kj}$. See \cite{bb2}.
\end{proof}

We will be interested in what follows in the following notion:

\begin{definition}
A stationary model for $C(G)$ is a random matrix model
$$\pi:C(G)\to M_K(C(X))$$
having the property $\int_G=(tr\otimes\int_X)\pi$.
\end{definition}

As a first remark, any stationary model is faithful. Indeed, the stationarity condition gives a factorization $\pi:C(G)\to C(G)_{red}\subset M_K(C(X))$, and since the image algebra $C(G)_{red}$ follows to be of type I, and therefore nuclear, $G$ must be co-amenable, and so $\pi$ must be faithful. For some background on these questions, we refer to \cite{bmt}, \cite{ntu}.

We can study the stationary models by using the idempotent state technology from Proposition 1.4 and Proposition 1.5 above. The result here is as follows:

\begin{theorem}
For $\pi:C(G)\to M_K(C(X))$, the following are equivalent:
\begin{enumerate}
\item $Im(\pi)$ is a Hopf algebra, and $(tr\otimes\int_X)\pi$ is the Haar integration on it.

\item $\psi=(tr\otimes\int_X)\pi$ satisfies the idempotent state property $\psi*\psi=\psi$.

\item $T_e^2=T_e$, $\forall p\in\mathbb N$, $\forall e\in\{1,*\}^p$, where $(T_e)_{i_1\ldots i_p,j_1\ldots j_p}=(tr\otimes\int_X)(U_{i_1j_1}^{e_1}\ldots U_{i_pj_p}^{e_p})$.
\end{enumerate}
If these conditions are satisfied, we say that $\pi$ is stationary on its image.
\end{theorem}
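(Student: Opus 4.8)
The plan is to prove the two easy equivalences $(2)\Leftrightarrow(3)$ and $(1)\Rightarrow(2)$ by essentially formal manipulations, and then to isolate the substantive implication $(2)\Rightarrow(1)$, which I would derive from the Hopf image construction together with the Cesàro limit of Proposition 1.4. The two propositions preceding the statement are exactly the tools needed: Proposition 1.5 converts the convolution powers of $\psi=(tr\otimes\int_X)\pi$ into powers of the matrices $T_e$, while Proposition 1.4 supplies the Cesàro formula for the Haar integration of an inner faithful model.

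For $(2)\Leftrightarrow(3)$, I would observe that $\psi$ is precisely $\int_G^1$, while $\psi*\psi=\psi^{*2}=\int_G^2$. By Proposition 1.5 these two functionals take, on the monomials $u_{i_1j_1}^{e_1}\ldots u_{i_pj_p}^{e_p}$, the values $(T_e)_{i_1\ldots i_p,j_1\ldots j_p}$ and $(T_e^2)_{i_1\ldots i_p,j_1\ldots j_p}$ respectively. Since these monomials span a dense $*$-subalgebra of $C(G)$ and both functionals are continuous, the equality $\psi*\psi=\psi$ holds if and only if the entries of $T_e$ and of $T_e^2$ agree for every $p$ and every $e\in\{1,*\}^p$, that is, if and only if $T_e^2=T_e$ for all such $e$.

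For $(1)\Rightarrow(2)$, recall that the Haar functional $h$ of any compact quantum group is idempotent: since $(h\otimes id)\Delta=h(\cdot)1$ by invariance, one gets $h*h=(h\otimes h)\Delta=h$. Thus if $Im(\pi)$ is a Hopf algebra and the functional $\bar\varphi$ induced by $tr\otimes\int_X$ is its Haar integration, then pulling $\bar\varphi$ back along the quotient map $C(G)\to Im(\pi)$, which intertwines the comultiplications, yields $\psi*\psi=\psi$.

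The content is in $(2)\Rightarrow(1)$. Here I would first factorize $\pi=\nu\circ\rho$ through its Hopf image, with $\rho:C(G)\to C(H)$ a surjective Hopf algebra morphism and $\nu:C(H)\to M_K(C(X))$ inner faithful, so that $\psi=\psi_H\circ\rho$ with $\psi_H=(tr\otimes\int_X)\nu$. Since $\Delta_H\rho=(\rho\otimes\rho)\Delta_G$, one computes $\psi*\psi=(\psi_H*\psi_H)\rho$, so idempotency of $\psi$ forces $\psi_H*\psi_H=\psi_H$, hence $\psi_H^{*r}=\psi_H$ for all $r$. Applying Proposition 1.4 to the inner faithful model $\nu$, the Cesàro limit computing $\int_H$ is now stationary and equals $\psi_H$; thus $\psi_H$ is the Haar integration of $H$. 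The step I expect to be the main obstacle is the final identification of $Im(\pi)=Im(\nu)$ with the reduced algebra $C(H)_{red}$: writing $\int_H=\bar\varphi\circ\nu$ with $\bar\varphi$ the restriction of $tr\otimes\int_X$ to the image, the inclusion $\ker\nu\subseteq\{a:\int_H(a^*a)=0\}$ is automatic, while for the reverse inclusion I would invoke faithfulness of $tr\otimes\int_X$ on $M_K(C(X))$ (assuming as usual that $\int_X$ is faithful), so that $\bar\varphi$ is faithful on $Im(\nu)$. This matches $\ker\nu$ with the null ideal of the Haar state, identifies $Im(\pi)$ with $C(H)_{red}$, a Hopf algebra carrying $\bar\varphi$ as its Haar integration, and establishes $(1)$.
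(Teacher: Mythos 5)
Your overall strategy coincides with the paper's: you factorize $\pi$ through its Hopf image, apply the Ces\`aro formula of Proposition 1.4 to the inner faithful part, and derive $(2)\Leftrightarrow(3)$ from Proposition 1.5 plus density of the span of the monomials $u_{i_1j_1}^{e_1}\ldots u_{i_pj_p}^{e_p}$. Your identity $\psi*\psi=(\psi_H*\psi_H)\rho$ is simply an explicit justification of the paper's assertion that the conditions (1,2,3) depend only on the factorized representation, and your treatment of $(1)\Rightarrow(2)$ is the same triviality as in the paper.

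The divergence, and the one genuine gap, is at the end of $(2)\Rightarrow(1)$. After proving $\int_H=\psi_H$ you identify $Im(\pi)=Im(\nu)\cong C(H)_{red}$ and then assert that this reduced algebra is ``a Hopf algebra carrying $\bar\varphi$ as its Haar integration''. In the paper's formalism (Definition 1.1), being a Hopf algebra includes having a counit defined as a morphism of $C^*$-algebras, and the counit of $C(H)$ descends to $C(H)_{red}$ precisely when $\widehat{H}$ is amenable; for a general compact quantum group the reduced algebra carries only the comultiplication (and, in the Kac case, the antipode), so your last sentence asserts exactly the point that remains to be proved. The paper closes this gap with the amenability argument stated after Definition 1.6: the image algebra $C(H)_{red}\subset M_K(C(X))$ is of type I, hence nuclear, hence $H$ is co-amenable by \cite{bmt} (the Kac assumption $S^2=id$ being in force throughout), and consequently the canonical map $C(H)\to C(H)_{red}$ is injective. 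Thus the factorized representation $\nu$ is faithful, $Im(\pi)\cong C(H)$ is literally the Hopf image algebra with its counit and antipode, and $\psi_H$ is its Haar integration. Since you have already identified the image with $C(H)_{red}$, inserting this nuclearity/co-amenability step is all that is missing; with it, your proof becomes the paper's.
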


\begin{proof}
Let us factorize our matrix model, as in Definition 1.3 above:
$$\pi:C(G)\to C(G')\to M_K(C(X))$$

Now observe that the conditions (1,2,3) only depend on the factorized representation $\pi':C(G')\to M_K(C(X))$. Thus, we can assume $G=G'$, which means that we can assume that $\pi$ is inner faithful. We can therefore use the formula in Proposition 1.4:
$$\int_G=\lim_{k\to\infty}\frac{1}{k}\sum_{r=1}^k\psi^{*r}$$

$(1)\implies(2)$ This is clear from definitions, because the Haar integration on any quantum group satisfies the equation $\psi*\psi=\psi$.

$(2)\implies(1)$ Assuming $\psi*\psi=\psi$, we have $\psi^{*r}=\psi$ for any $r\in\mathbb N$, and the above Ces\`aro limiting formula gives $\int_G=\psi$. By using now the amenability arguments explained after Definition 1.6, we obtain as well that $\pi$ is faithful, as desired.

In order to establish now $(2)\Longleftrightarrow(3)$, we use the formula in Proposition 1.5:
$$\psi^{*r}(u_{i_1j_1}^{e_1}\ldots u_{i_pj_p}^{e_p})=(T_e^r)_{i_1\ldots i_p,j_1\ldots j_p}$$

$(2)\implies(3)$ Assuming $\psi*\psi=\psi$, by using the above formula at $r=1,2$ we obtain that the matrices $T_e$ and $T_e^2$ have the same coefficients, and so they are equal.

$(3)\implies(2)$ Assuming $T_e^2=T_e$, by using the above formula at $r=1,2$ we obtain that the linear forms $\psi$ and $\psi*\psi$ coincide on any product of coefficients $u_{i_1j_1}^{e_1}\ldots u_{i_pj_p}^{e_p}$. Now since these coefficients span a dense subalgebra of $C(G)$, this gives the result.
\end{proof}

As a conclusion, from a random matrix viewpoint, the quantum groups having a stationary model are the ``simplest''. We will systematically investigate them in what follows, often by using the criterion in Theorem 1.7 (3) above, in order to detect them.

\section{Basic examples}

In this section we discuss some basic examples of compact quantum groups coming from stationary models. All our examples will have the feature that $X$ is in fact a compact Lie group, endowed with its Haar measure. So, let us begin with:

\begin{definition}
An algebraic stationary model for $C(G)$ is a model of type
$$\pi:C(G)\subset M_K(C(H))\quad,\quad\int_G=\left(tr\otimes\int_H\right)\pi$$
with $H$ being a compact Lie group, endowed with its Haar measure.
\end{definition}

There are of course many interesting stationary models which are not algebraic, but rather of ``universal'' nature. We will discussed them later on, in sections 4-5 below.

As a first trivial result, regarding the examples, we have:

\begin{proposition}
The following have algebraic stationary models:
\begin{enumerate}
\item The compact Lie groups.

\item The finite quantum groups.
\end{enumerate}
\end{proposition}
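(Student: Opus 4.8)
The plan is to exhibit an explicit model in each case and verify the stationarity identity $\int_G=(tr\otimes\int_H)\pi$ of Definition 2.1 directly.

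For (1), the compact Lie groups $G\subset U_N$, I take $H=G$, $K=1$, and let $\pi$ be the identity embedding $C(G)\subset M_1(C(G))=C(G)$. The normalized trace on $M_1$ is the identity map, so the right-hand side $(tr\otimes\int_G)\pi$ collapses to $\int_G$, and stationarity holds tautologically.

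For (2), the finite quantum groups, I take $H=\{.\}$ (a point, i.e. the trivial compact Lie group, so that $\int_H$ is evaluation and $M_K(C(H))=M_K(\mathbb C)$), set $K=|G|=\dim C(G)$, and let $\pi=\lambda$ be the left regular representation, acting on $C(G)$ by left multiplication in an orthonormal basis. The whole statement then reduces to the single identity $\int_G=tr\circ\lambda$. To prove it I use the Kac assumption $S^2=id$, under which $C(G)$ is a finite-dimensional Hopf $C^*$-algebra with tracial Haar functional. Decomposing $C(G)=\bigoplus_i M_{n_i}(\mathbb C)$, I first identify $\int_G$ with the canonical trace $\frac1K\sum_i n_i\,Tr_{n_i}$: this is a normalized trace, since $\frac1K\sum_i n_i^2=1$, and one checks its invariance directly, so by uniqueness of the Haar functional it is $\int_G$. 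Next, the block $M_{n_i}$ acting on itself by left multiplication is exactly $n_i$ copies of its fundamental representation, so $Tr_K(\lambda(a))=\sum_i n_i\,Tr_{n_i}(a_i)=K\int_G(a)$, giving $tr(\lambda(a))=\int_G(a)$ for all $a$.

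The main obstacle is (2), specifically pinning down the Haar functional as the weighted trace $\frac1K\sum_i n_i\,Tr_{n_i}$ together with the multiplicity count for the regular representation; everything else is bookkeeping. As a sanity check I would test the formula on the extreme cases $C(G)=C(\Gamma)$ and $C(G)=C^*(\Gamma)$ for a finite group $\Gamma$, where $tr\circ\lambda$ should recover respectively the normalized counting measure and the functional $g\mapsto\delta_{g,e}$, matching the known Haar states.
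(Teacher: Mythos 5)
Your choice of models is exactly the paper's: the identity embedding $C(G)\subset M_1(C(G))$ for compact Lie groups, and the left regular representation $\lambda:C(G)\to M_{|G|}(\mathbb C)$ for finite quantum groups. Part (1) is complete and tautological, as you say. In part (2), your multiplicity count is also correct: writing $C(G)=\bigoplus_iM_{n_i}(\mathbb C)$, each block acts on itself by left multiplication as $n_i$ copies of its column representation, so $tr(\lambda(a))=\frac{1}{K}\sum_in_i\,Tr_{n_i}(a_i)$, and your two sanity checks for $C(\Gamma)$ and $C^*(\Gamma)$ come out right.

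The gap is the step you yourself flag as the main obstacle and then dispose of in half a sentence: ``one checks its invariance directly''. For a general finite quantum group there is no explicit formula for $\Delta$ against which to check anything; the $C^*$-algebra decomposition $\bigoplus_iM_{n_i}(\mathbb C)$ is blind to the coproduct, and nothing in your argument distinguishes the weights $n_i$ from any other choice of trace weights $t_i$ -- every choice gives a tracial state, but only one gives the Haar state. So the identification of the weighted trace with $\int_G$ is precisely the theorem to be proved, and as written the argument is circular at its key point. A clean way to close it, staying inside the paper's own framework, is via Theorem 1.7: show that $\psi=tr\circ\lambda$ is an idempotent state. This follows from the absorption property of the regular representation: for a finite-dimensional Hopf algebra the invertible operator $\theta(b\otimes c)=\sum c_{(1)}b\otimes c_{(2)}$ (Sweedler notation, with inverse built from the antipode, using $S^2=id$) satisfies $(\lambda\otimes\lambda)\Delta(a)=\theta(1\otimes\lambda(a))\theta^{-1}$, whence
$$\psi*\psi(a)=(tr\otimes tr)\left[\theta(1\otimes\lambda(a))\theta^{-1}\right]=(tr\otimes tr)(1\otimes\lambda(a))=tr(\lambda(a))=\psi(a)$$
by conjugation-invariance of the full trace. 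Since $\lambda$ is faithful (left multiplication on a unital algebra), hence inner faithful, Theorem 1.7 and Proposition 1.4 then give $\int_G=\psi$, which is simultaneously your weighted-trace formula and the stationarity claim. To be fair, the paper's own proof is equally terse at this point -- it simply asserts that $\int_G=tr\circ\lambda$ holds -- but a self-contained blind proof needs an argument of the above type, or at least a citation to the finite quantum group / Kac algebra literature where this identity is established.
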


\begin{proof}
(1) This is clear, because we can use here the identity map:
$$id:C(G)\to M_1(C(G))$$

(2) This is clear as well, because we can use here the regular representation:
$$\lambda:C(G)\to M_{|G|}(\mathbb C)$$

To be more precise, if we endow the linear space $H=C(G)$ with the scalar product $<a,b>=\int_Ga^*b$, we have a representation $\lambda:C(G)\to B(H)$ given by $a\to[b\to ab]$. Now since we have $H\simeq\mathbb C^{|G|}$ with $|G|=\dim A$, we can view $\lambda$ as a matrix model map, as above, and the stationarity axiom $\int_G=tr\circ\lambda$ is satisfied, as desired. 
\end{proof}

In the group dual case, let us first recall that the matrix models $\pi:C^*(\Gamma)\to M_K(C(H))$ must come from group representations $\rho:\Gamma\to C(H,U_K)$, with $\pi$ being inner faithful if and only if $\rho$ is faithful. With this identification made, we have:

\begin{proposition}
An algebraic model $\rho:\Gamma\subset C(H,U_K)$ is stationary when:
$$\int_Htr(g^x)dx=0,\forall g\neq1$$
Moreover, the examples include all the abelian groups, and all finite groups.
\end{proposition}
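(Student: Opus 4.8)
The plan is to use the stationarity criterion from Theorem 1.7(3), which reduces stationarity to checking that all the matrices $T_e$ are idempotent. Since $G=\widehat{\Gamma}$ is a group dual, the coordinates $u_{ij}$ come from the representation $\rho:\Gamma\to C(H,U_K)$, and I would begin by writing everything in terms of the group elements $g\in\Gamma$ rather than the abstract coordinates. Concretely, for a group dual the standard generators are the group elements themselves, so the products $u_{i_1j_1}^{e_1}\ldots u_{i_pj_p}^{e_p}$ and the integrals $(tr\otimes\int_H)(\ldots)$ should collapse, after using $\rho(gh)=\rho(g)\rho(h)$, into expressions involving $\int_H tr(g^x)\,dx$ for single group elements $g\in\Gamma$. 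The first key step is therefore to show that the hypothesis $\int_H tr(g^x)\,dx=0$ for all $g\neq 1$ is exactly the statement that $\psi=(tr\otimes\int_H)\pi$ agrees with the canonical trace on $C^*(\Gamma)$, namely $\psi(g)=\delta_{g,1}$.

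Once that identification is in place, the second step is to verify the idempotent-state property $\psi*\psi=\psi$ directly, which by Theorem 1.7 is equivalent to stationarity. On a group dual the convolution of functionals is governed by the comultiplication $\Delta(g)=g\otimes g$, so I would compute $(\psi*\psi)(g)=(\psi\otimes\psi)\Delta(g)=\psi(g)\psi(g)$. Using $\psi(g)=\delta_{g,1}$ this gives $\psi(g)^2=\delta_{g,1}=\psi(g)$, establishing the idempotent property on generators, and hence on all of $C^*(\Gamma)$ by linearity and density. This is cleaner than working with the full matrices $T_e$, and it exploits the fact that $\Delta$ is diagonal on group-like elements; I would phrase the argument through condition (2) of Theorem 1.7 rather than condition (3) for this reason.

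For the final assertion, I would treat the two families separately. For an \emph{abelian} group $\Gamma$, Pontryagin duality gives $\widehat{\Gamma}$ as a genuine compact group, and I would take $H=\widehat{\Gamma}$ with $K=1$, letting $\rho$ send $g$ to the corresponding character; then $\int_H tr(g^x)\,dx=\int_{\widehat{\Gamma}}\chi(g)\,d\chi$ is the integral of a nontrivial character over a compact group, which vanishes for $g\neq 1$, so the criterion applies. For a \emph{finite} group $\Gamma$, the dual $\widehat{\Gamma}$ is a finite quantum group, already covered by Proposition 2.2(2); alternatively one may take $H=\{.\}$ and $K=|\Gamma|$ with $\rho$ the regular representation, for which $tr(\lambda_g)=\delta_{g,1}$ by the standard character computation, again matching the criterion.

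The main obstacle I anticipate is not the verification itself but making precise the passage from abstract coordinates $u_{ij}$ to group elements, and ensuring that the vanishing condition on single elements $g$ genuinely controls \emph{all} the mixed moments $(T_e)_{i_1\ldots i_p,j_1\ldots j_p}$. The point that makes this work is that for a group dual every monomial in the $u_{ij}$ and their adjoints reduces, after multiplying the unitaries $\rho(g)^{\pm 1}$ inside the trace, to a combination of single group elements, so no genuinely new moments arise beyond the $\psi(g)$; once this reduction is spelled out, the idempotency is automatic. I would be careful to check that the adjoints $u_{ij}^*$ correspond to $\rho(g^{-1})$ so that the hypothesis, stated for all $g\neq 1$ (and thus also $g^{-1}\neq 1$), indeed covers both $e=1$ and $e=*$.
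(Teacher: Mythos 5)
Your proposal is correct, and its core is the same as the paper's: reduce everything to group elements $g\in\Gamma$, observe that the hypothesis says exactly $\psi(g)=\delta_{g,1}$ (the case $g=1$ being automatic by unitality), and handle the examples via the character embedding $\Gamma\subset C(\widehat{\Gamma},U_1)$ in the abelian case and the regular representation in the finite case --- these are verbatim the paper's examples. The one place you diverge is the final step: the paper simply uses that the Haar state of a group dual is $g\mapsto\delta_{g,1}$, so $\psi$ and $\int_G$ agree on the dense span of the group elements and hence everywhere, with no appeal to Theorem 1.7 at all; you instead verify the idempotent property $(\psi*\psi)(g)=\psi(g)^2=\psi(g)$ via group-likeness of $\Delta$ and invoke Theorem 1.7 (2). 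That route also works, but note that Theorem 1.7 as stated only yields that $\pi$ is \emph{stationary on its image}; to obtain stationarity in the sense of Definition 1.6 you must additionally use that $\pi$ is inner faithful --- which holds here precisely because $\rho$ is an embedding, as the paper records just before the proposition --- so that the Ces\`aro formula of Proposition 1.4 upgrades $\psi^{*r}=\psi$ to $\int_G=\psi$. Make this last point explicit; with it, your argument is complete, at the cost of being slightly longer than the paper's direct comparison with the known Haar state.
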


\begin{proof}
Consider indeed a group embedding $\rho:\Gamma\subset C(H,U_K)$, which produces by linearity an inner faithful model $\pi:C^*(\Gamma)\to M_K(C(H))$. By linearity and continuity, it is enough to formulate the stationarity condition on the group elements $g\in C^*(\Gamma)$. With the notation $\rho(g)=(x\to g^x)$, this stationarity condition reads:
$$\int_Htr(g^x)dx=\delta_{g,1}$$

Since this equality is trivially satisfied at $g=1$, where by unitality of our representation we must have $g^x=1$ for any $x\in H$, we are led to the condition in the statement.

Regarding the examples, these are clear from Proposition 2.2. More precisely:

(1) When $\Gamma$ is abelian we can use the following trivial embedding:
$$\Gamma\subset C(\widehat{\Gamma},U_1)\quad:\quad g\to[\chi\to\chi(g)]$$

(2) When $\Gamma$ is finite we can use the left regular representation:
$$\Gamma\subset\mathcal L(\mathbb C\Gamma)\quad:\quad g\to[h\to gh]$$

Observe that in both cases, the stationarity condition is trivially satisfied.
\end{proof}

In general, deciding whether a given discrete group $\Gamma$ can have or not a stationary model looks like a quite subtle question. Note that, for this to hold, $\Gamma$ must be of course amenable. We will be back to these questions later on, on various occasions.

Let us recall now that the quantum group $O_N^*\subset O_N^+$ is constructed by imposing to the standard coordinates $u_{ij}\in C(O_N^+)$ the half-commutation relations $abc=cba$. Observe that we have $O_N\subset O_N^*\subset O_N^+$. We have the following result, coming from \cite{bdu}:

\begin{theorem}
Any half-classical quantum group $G\subset O_N^*$ has an algebraic stationary model, with $K=2$. 
\end{theorem}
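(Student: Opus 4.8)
The plan is to first treat the universal case $G=O_N^*$, and then reduce the general case to it. For $O_N^*$ I would take $H=U_N$ with its Haar measure, denote by $a_{ij}\in C(U_N)$ the standard coordinates, and define
$$\pi:C(O_N^*)\to M_2(C(U_N)),\qquad u_{ij}\mapsto U_{ij}:=\begin{pmatrix}0&a_{ij}\\ \overline{a_{ij}}&0\end{pmatrix}.$$
Three things must be checked for this to be a valid model. First, each $U_{ij}$ is self-adjoint, as required for orthogonal coordinates. Second, a direct multiplication shows that $\sum_kU_{ik}U_{jk}$ is the diagonal matrix with entries $\sum_ka_{ik}\overline{a_{jk}}$ and $\sum_k\overline{a_{ik}}a_{jk}$, which both equal $\delta_{ij}$ precisely because $A=(a_{ij})$ is unitary; the same holds for $\sum_kU_{ki}U_{kj}$, giving orthogonality. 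Third, the half-commutation relations $U_{ij}U_{kl}U_{mn}=U_{mn}U_{kl}U_{ij}$ hold automatically: a product of three off-diagonal $2\times2$ matrices is again off-diagonal, with entries that are monomials in the commuting functions $a_{**},\overline{a_{**}}$, so reversing the outer two factors leaves them unchanged.

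Next I would establish stationarity using Theorem 1.7. Since the $U_{ij}$ are self-adjoint the exponents $e$ are irrelevant, and everything reduces to computing $(tr\otimes\int_{U_N})(U_{i_1j_1}\cdots U_{i_pj_p})$. The off-diagonal structure forces a parity dichotomy: for $p$ odd the product is off-diagonal, hence traceless, while for $p=2s$ even it is diagonal, with conjugate diagonal entries of the form $a_{i_1j_1}\overline{a_{i_2j_2}}a_{i_3j_3}\cdots\overline{a_{i_{2s}j_{2s}}}$. Taking the normalized trace and integrating, and using that the resulting integral is real, I am left with the single unitary moment
$$\int_{U_N}a_{i_1j_1}\overline{a_{i_2j_2}}a_{i_3j_3}\cdots\overline{a_{i_{2s}j_{2s}}}\,,$$
governed by the Weingarten calculus of $U_N$, i.e. by pairs of permutations in $S_s$. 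The point is that this coincides with the half-liberated Weingarten expansion of $\int_{O_N^*}u_{i_1j_1}\cdots u_{i_{2s}j_{2s}}$, whose relevant partitions are the balanced pairings, each string joining an odd leg to an even leg, via the standard bijection between such pairings and $S_s$. Matching the two expansions term by term shows $\psi=(tr\otimes\int_{U_N})\pi$ agrees with $\int_{O_N^*}$ on all coordinates; equivalently one checks the idempotent condition $T_e^2=T_e$ of Theorem 1.7(3).

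For a general half-classical $G\subset O_N^*$, I would invoke the classification of \cite{bdu}, by which such $G$ is associated to a closed subgroup $H\subset U_N$ in a way compatible with the surjection $C(O_N^*)\to C(G)$ and the restriction $C(U_N)\to C(H)$. Composing $\pi$ with this restriction yields a model $\pi_G:C(G)\to M_2(C(H))$ with $H$ a compact Lie group, which is inner faithful precisely because $H$ is taken to be the group attached to $G$. Stationarity of $\pi_G$ then follows from the same moment computation, now reading the unitary integrals as integrals over $H$: the even moments of the coordinates of $G$ are computed by the balanced pairings weighted according to $H$, which is exactly $(tr\otimes\int_H)\pi_G$.

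The main obstacle is the stationarity verification, i.e. the Weingarten matching of the second step. It requires the precise combinatorial identification of the category of partitions of $O_N^*$ with the balanced (odd-to-even) pairings, the bijection of these with the permutations governing the unitary Weingarten function, and a check that the $U_N$ and $O_N^*$ normalizations agree term by term. For general $G$ the only additional input is the classification of \cite{bdu}: one needs that the subgroup $H\subset U_N$ attached to $G$ reproduces exactly the half-liberated integration on $C(G)$, so that the restricted model remains simultaneously inner faithful and stationary.
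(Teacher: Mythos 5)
Your construction of the model itself is fine, and agrees with the paper's: the antidiagonal $2\times2$ matrices over $C(H)$, $H\subset U_N$ self-conjugate, with the universal case corresponding to $H=U_N$, exactly as in \cite{bdu}. The problem is the stationarity verification. For the universal case your Weingarten-matching strategy (identifying the balanced pairings of $O_N^*$ at order $2s$ with the permutations of $S_s$ governing the $U_N$ integrals) is a legitimate, known route, though you defer the actual term-by-term matching, which is the entire analytic content. The genuine gap is in the reduction to general $G\subset O_N^*$. Your claim that ``the even moments of the coordinates of $G$ are computed by the balanced pairings weighted according to $H$'' has no justification: a Weingarten expansion indexed by pairings is a feature of the \emph{easy} quantum groups $O_N^*$ and $U_N$, and for an arbitrary self-conjugate closed subgroup $H\subset U_N$ (equivalently, an arbitrary non-classical $G\subset O_N^*$) the moments of the coordinates are governed by the full tensor category of $H$, not by pairings. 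More structurally, your method requires knowing $\int_G$ independently, in closed combinatorial form, so as to compare it with $(tr\otimes\int_H)\pi_G$; for general $G$ no such formula is available, so the comparison cannot even be set up. A smaller issue: the \cite{bdu} correspondence you invoke concerns \emph{non-classical} $G$, so the classical case $G\subset O_N$ needs a separate (easy) argument, which you omit.

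The paper's proof sidesteps all of this by never computing $\int_G$ at all. It applies the criterion of Theorem 1.7 (3): one only needs $T_p^2=T_p$, where $(T_p)_{ij}=\int_H Re(v_{i_1j_1}\ldots\bar{v}_{i_pj_p})$ for $p$ even, and $T_p=0$ for $p$ odd. The identity $Re(x)Re(y)=\frac{1}{2}(Re(xy)+Re(x\bar{y}))$ turns the entries of $T_p^2$ into integrals over $H\times H$ of expressions in $vw$ and $v\bar{w}$; translation invariance of the Haar measure of $H$ handles the first term, and self-conjugacy of $H$ (so that $\bar{w}$ is again Haar distributed) handles the second, giving $T_p^2=T_p$ for \emph{any} self-conjugate $H\subset U_N$ with no combinatorial input whatsoever. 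Stationarity on the image then follows from Theorem 1.7, and faithfulness of the \cite{bdu} embedding $C(G)\subset M_2(C(H))$ upgrades this to stationarity for $C(G)$. If you want to salvage your approach, you should replace the Weingarten matching in your third step by this idempotency argument; as written, your proof only has a chance of working for $G=O_N^*$ itself.
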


\begin{proof}
In the classical case, $G\subset O_N$, we already know that we have such a model, with $K=1$. In the non-classical case now, $G\not\subset O_N$, it is known from \cite{bdu} that  we have a matrix model as follows, for a certain self-conjugate subgroup $H\subset U_N$:
$$\pi:C(G)\subset M_2(C(H))\quad:\quad u_{ij}=\begin{pmatrix}0&v_{ij}\\\bar{v}_{ij}&0\end{pmatrix}$$

Let us check now the stationarity condition, by using the criterion in Theorem 1.7 (3) above. Since the fundamental representation is self-adjoint, the various matrices $T_e$ with $e\in\{1,*\}^p$ are all equal. We denote this common matrix by $T_p$.

According to the definition of $T_p$, this matrix is given by:
$$(T_p)_{i_1\ldots i_p,j_1\ldots j_p}=\left(tr\otimes\int_H\right)\left[\begin{pmatrix}0&v_{i_1j_1}\\\bar{v}_{i_1j_1}&0\end{pmatrix}\ldots\ldots\begin{pmatrix}0&v_{i_pj_p}\\\bar{v}_{i_pj_p}&0\end{pmatrix}\right]$$

Since when multipliying an odd number of antidiagonal matrices we obtain an atidiagonal matrix, we have $T_p=0$ for $p$ odd. Also, when $p$ is even, we have:
\begin{eqnarray*}
(T_p)_{i_1\ldots i_p,j_1\ldots j_p}
&=&\left(tr\otimes\int_H\right)\begin{pmatrix}v_{i_1j_1}\ldots\bar{v}_{i_pj_p}&0\\0&\bar{v}_{i_1j_1}\ldots v_{i_pj_p}\end{pmatrix}\\
&=&\frac{1}{2}\left(\int_Hv_{i_1j_1}\ldots\bar{v}_{i_pj_p}+\int_H\bar{v}_{i_1j_1}\ldots v_{i_pj_p}\right)\\
&=&\int_HRe(v_{i_1j_1}\ldots\bar{v}_{i_pj_p})
\end{eqnarray*}

We have $T_p^2=T_p=0$ when $p$ is odd, so we are left with proving that we have $T_p^2=T_p$, when $p$ is even. For this purpose, we use the following formula:
$$Re(x)Re(y)=\frac{1}{2}\left(Re(xy)+Re(x\bar{y})\right)$$

By using this identity for each of the terms which appear in the product, and multi-index notations in order to simplify the writing, we obtain:
\begin{eqnarray*}
(T_p^2)_{ij}
&=&\sum_{k_1\ldots k_p}(T_p)_{i_1\ldots i_p,k_1\ldots k_p}(T_p)_{k_1\ldots k_p,j_1\ldots j_p}\\
&=&\int_H\int_H\sum_{k_1\ldots k_p}Re(v_{i_1k_1}\ldots\bar{v}_{i_pk_p})Re(w_{k_1j_1}\ldots\bar{w}_{k_pj_p})dvdw\\
&=&\frac{1}{2}\int_H\int_H\sum_{k_1\ldots k_p}Re(v_{i_1k_1}w_{k_1j_1}\ldots\bar{v}_{i_pk_p}\bar{w}_{k_pj_p})+Re(v_{i_1k_1}\bar{w}_{k_1j_1}\ldots\bar{v}_{i_pk_p}w_{k_pj_p})dvdw\\
&=&\frac{1}{2}\int_H\int_HRe((vw)_{i_1j_1}\ldots(\bar{v}\bar{w})_{i_pj_p})+Re((v\bar{w})_{i_1j_1}\ldots(\bar{v}w)_{i_pj_p})dvdw
\end{eqnarray*}

Now since $vw\in H$ is uniformly distributed when $v,w\in H$ are uniformly distributed, the quantity on the left integrates up to $(T_p)_{ij}$. Also, since $H$ is conjugation-stable, $\bar{w}\in H$ is uniformly distributed when $w\in H$ is uniformly distributed, so the quantity on the right integrates up to the same quantity, namely $(T_p)_{ij}$. Thus, we have:
$$(T_p^2)_{ij}=\frac{1}{2}\Big((T_p)_{ij}+(T_p)_{ij}\Big)=(T_p)_{ij}$$

Summarizing, we have obtained that for any $p$, the condition $T_p^2=T_p$ is satisfied. Thus Theorem 1.7 applies, and shows that our model is stationary, as claimed.
\end{proof}

As a consequence, let us work out the discrete group case:

\begin{proposition}
Any reflection group $\Gamma=<g_1,\ldots,g_N>$ which is ``half-abelian'', in the sense that $g_ig_jg_k=g_kg_jg_i$, has an algebraic stationary model, with $K=2$.
\end{proposition}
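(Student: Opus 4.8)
The plan is to deduce this from Theorem 2.4, by realizing the group dual $\widehat{\Gamma}$ as a half-classical quantum subgroup of $O_N^*$. First I would use that for a reflection group each generator satisfies $g_i^2=1$, so that $g_i$ is a self-adjoint unitary in $C^*(\Gamma)$, with $\bar g_i=g_i$. Consequently the diagonal matrix $u=\mathrm{diag}(g_1,\ldots,g_N)\in M_N(C^*(\Gamma))$ has self-adjoint entries and is orthogonal, since $\sum_ku_{ik}u_{jk}=\delta_{ij}g_i^2=\delta_{ij}$. Checking that $u_{ij}=\delta_{ij}g_i$ satisfies the defining relations of $C(O_N^+)$, we obtain a surjection $C(O_N^+)\to C^*(\Gamma)$, which amounts to an embedding $\widehat{\Gamma}\subset O_N^+$.

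Next I would verify that the half-abelian assumption is exactly what upgrades this to $\widehat{\Gamma}\subset O_N^*$. Indeed, the half-commutation relations $abc=cba$ imposed on the coordinates, namely $u_{ab}u_{cd}u_{ef}=u_{ef}u_{cd}u_{ab}$, are trivially satisfied whenever one of the factors is off-diagonal, since $u_{ij}=0$ for $i\neq j$; and for the diagonal entries they become precisely $g_ig_jg_k=g_kg_jg_i$. Thus the surjection $C(O_N^+)\to C^*(\Gamma)$ factors through $C(O_N^*)$ exactly when $\Gamma$ is half-abelian, giving in that case $\widehat{\Gamma}\subset O_N^*$.

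With the embedding $\widehat{\Gamma}\subset O_N^*$ in hand, I would simply invoke Theorem 2.4, which produces an algebraic stationary model with $K=2$ for any half-classical quantum group, in particular for $G=\widehat{\Gamma}$. This completes the argument.

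Since the statement is essentially a direct corollary, I do not expect a serious obstacle; the only point requiring care is the bookkeeping of the previous paragraph, namely confirming that the half-commutation relations restricted to a diagonal fundamental representation reduce exactly to the relation $g_ig_jg_k=g_kg_jg_i$, and nothing more. If one wanted an explicit model rather than abstract existence, the extra work would be to identify the self-conjugate subgroup $H\subset U_N$ appearing in the proof of Theorem 2.4 in this concrete situation, but this is not needed for the statement as given.
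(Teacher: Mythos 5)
Your proposal is correct and takes essentially the same route as the paper: both obtain the result as a direct corollary of Theorem 2.4. The only differences are that you explicitly verify the embedding $\widehat{\Gamma}\subset O_N^*$ via the diagonal unitary $u=\mathrm{diag}(g_1,\ldots,g_N)$ (a point the paper leaves implicit, appealing instead to \cite{bdu} to write down the concrete $2\times2$ model), and that you treat the abelian and non-abelian cases uniformly, whereas the paper routes the abelian case separately through Proposition 2.3.
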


\begin{proof}
This follows from Theorem 2.4. To be more precise, in the non-abelian case, the results in \cite{bdu} show that $\widehat{\Gamma}\subset O_N^*$ must come from a group dual $\widehat{\Lambda}\subset U_N$, via the construction there, and with $\Lambda=<h_1,\ldots,h_N>$, the corresponding model is:
$$\Gamma\subset C(\widehat{\Lambda},U_2)\quad:\quad g_i\to \left[\chi\to\begin{pmatrix}0&\chi(h_i)\\ \bar{\chi}(h_i)&0\end{pmatrix}\right]$$

As for the abelian case, the result here follows from Proposition 2.3 above. 
\end{proof}

Observe that Theorem 2.4 fully extends Proposition 2.2 (1), in the orthogonal case. In the general unitary case, the recent results in \cite{bb3} suggest that any closed quantum subgroup $G\subset U_N^{**}$ should have an algebraic stationary $2\times2$ model, constructed by using certain antidiagonal matrices. We intend to discuss this in a future paper.

\section{Weyl models}

We discuss now some more subtle examples of algebraic models, coming from the Weyl matrices. Our starting point is the following key definition, due to Wang \cite{wa1}:

\begin{definition}
$C(S_N^+)$ is the universal algebra generated by the entries of a $N\times N$ matrix $w=(w_{ij})$ which is magic, in the sense that its entries are projections ($p=p^2=p^*$), summing up to $1$ on each row and each column.
\end{definition}

This algebra satisfies Woronowicz' axioms in \cite{wo1}, \cite{wo2}, and the underlying space $S_N^+$ is therefore a compact quantum group, called quantum permutation group.

Observe that any magic unitary $u\in M_N(A)$ produces a representation $\pi:C(S_N^+)\to A$, given by $\pi(w_{ij})=u_{ij}$. In particular, we have a representation as follows:
$$\pi:C(S_N^+)\to C(S_N)\quad:\quad w_{ij}\to\chi\left(\sigma\in S_N\big|\sigma(j)=i\right)$$

The corresponding embedding $S_N\subset S_N^+$ is an isomorphism at $N=2,3$, but not at $N\geq4$, where $S_N^+$ is a non-classical, infinite compact quantum group. See \cite{wa1}.

A key result, going back to \cite{bco}, was the construction of a stationary model for $C(S_4^+)$, using the Pauli matrices. We will review now this result, along with the recent generalizations from \cite{bne}. Our starting point is the following definition:

\begin{definition}
Given a finite abelian group $H$, the associated Weyl matrices are
$$W_{ia}:e_b\to<i,b>e_{a+b}$$
where $i\in H$, $a,b\in\widehat{H}$, and where $(i,b)\to<i,b>$ is the Fourier coupling $H\times\widehat{H}\to\mathbb T$.
\end{definition}

As a basic example, consider the cyclic group $H=\mathbb Z_2=\{0,1\}$. Here the Fourier coupling is given by $<i,b>=(-1)^{ib}$, and so the Weyl matrices act via $W_{00}:e_b\to e_b$, $W_{10}:e_b\to(-1)^be_b$, $W_{11}:e_b\to(-1)^be_{b+1}$, $W_{01}:e_b\to e_{b+1}$. Thus, we have:
$$W_{00}=\begin{pmatrix}1&0\\0&1\end{pmatrix}\ ,\ 
W_{10}=\begin{pmatrix}1&0\\0&-1\end{pmatrix}\ ,\ 
W_{11}=\begin{pmatrix}0&-1\\1&0\end{pmatrix}\ ,\ 
W_{01}=\begin{pmatrix}0&1\\1&0\end{pmatrix}$$

We recognize here, up to some multiplicative factors, the four Pauli matrices.

Now back to the general case, we have the following well-known result:

\begin{proposition}
The Weyl matrices are unitaries, and satisfy:
\begin{enumerate}
\item $W_{ia}^*=<i,a>W_{-i,-a}$.

\item $W_{ia}W_{jb}=<i,b>W_{i+j,a+b}$.

\item $W_{ia}W_{jb}^*=<j-i,b>W_{i-j,a-b}$.

\item $W_{ia}^*W_{jb}=<i,a-b>W_{j-i,b-a}$.
\end{enumerate}
\end{proposition}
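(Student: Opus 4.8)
The four formulas are all identities between linear operators on the space with orthonormal basis $(e_b)_{b\in\widehat{H}}$, so the plan is to verify each by evaluating both sides on an arbitrary basis vector. The only algebraic input needed is that the Fourier coupling is a bicharacter: for fixed $i$ the map $b\to<i,b>$ is a homomorphism $\widehat{H}\to\mathbb T$, and symmetrically in the first variable, so that $<i+j,b>=<i,b><j,b>$, $<i,a+b>=<i,a><i,b>$, $<-i,b>=<i,-b>=\overline{<i,b>}$, and $|<i,b>|=1$. I would record these relations at the outset, since every computation below reduces to them.

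For unitarity and for (1), I would work with matrix entries. From $W_{ia}e_b=<i,b>e_{a+b}$ one reads off $(W_{ia})_{db}=<i,b>\delta_{d,a+b}$; for fixed $i,a$ this is the permutation matrix of the bijection $b\to a+b$ of $\widehat{H}$, scaled by unimodular phases, so $W_{ia}$ is unitary. Taking adjoints gives $(W_{ia}^*)_{bd}=\overline{<i,b>}\,\delta_{d,a+b}$, whence $W_{ia}^*e_c=\overline{<i,c-a>}\,e_{c-a}$. Comparing this with $<i,a>W_{-i,-a}e_c=<i,a><-i,c>\,e_{c-a}$ and using $\overline{<i,c-a>}=\overline{<i,c>}\,<i,a>$ together with $<-i,c>=\overline{<i,c>}$ establishes (1).

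Property (2) is a one-line computation on $e_c$: applying first $W_{jb}$ and then $W_{ia}$ yields $<j,c><i,b+c>\,e_{a+b+c}$, and rewriting $<i,b+c>=<i,b><i,c>$ followed by $<i,c><j,c>=<i+j,c>$ turns the scalar into $<i,b><i+j,c>$, which is exactly the action of $<i,b>W_{i+j,a+b}$ on $e_c$. Finally I would deduce (3) and (4) formally from (1) and (2) rather than recompute from scratch. For (3), write $W_{jb}^*=<j,b>W_{-j,-b}$ by (1), apply (2) to $W_{ia}W_{-j,-b}$, and collect the scalar $<j,b><i,-b>=<j-i,b>$. For (4), write $W_{ia}^*=<i,a>W_{-i,-a}$, apply (2) to $W_{-i,-a}W_{jb}$, and collect $<i,a><-i,b>=<i,a-b>$.

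There is no serious obstacle; the whole proposition is a bookkeeping exercise. The one point demanding care is the dual bookkeeping: the first index of each Weyl matrix lives in $H$ and the second in $\widehat{H}$, and the coupling pairs only an $H$-element with an $\widehat{H}$-element, so one must apply multiplicativity in the correct slot and track the passage to complex conjugates whenever an index is negated. Keeping these conventions straight is the only place an error could realistically creep in.
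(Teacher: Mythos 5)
Your proposal is correct, and it is essentially the paper's proof: your basis-vector evaluations of (1) and (2) are the same computations the paper carries out in matrix-unit notation ($W_{ia}=\sum_b<i,b>E_{a+b,b}$), and your derivation of (3) and (4) by substituting (1) into (2) and collecting the phases $<j,b><i,-b>=<j-i,b>$ and $<i,a><-i,b>=<i,a-b>$ is word-for-word the paper's argument. The only point of divergence is unitarity: you prove it directly, observing that $W_{ia}$ is the permutation matrix of $b\to a+b$ scaled by unimodular phases, whereas the paper deduces it afterwards from (3) and (4), by setting $(j,b)=(i,a)$ to get $W_{ia}W_{ia}^*=W_{ia}^*W_{ia}=W_{00}=1$. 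Both arguments are valid; yours has the minor advantage of not making unitarity depend logically on the product formulas, while the paper's avoids any separate computation.
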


\begin{proof}
The unitary follows from (3,4), and the rest of the proof goes as follows:

(1) We have indeed the following computation:
\begin{eqnarray*}
W_{ia}^*
&=&\left(\sum_b<i,b>E_{a+b,b}\right)^*
=\sum_b<-i,b>E_{b,a+b}\\
&=&\sum_b<-i,b-a>E_{b-a,b}
=<i,a>W_{-i,-a}
\end{eqnarray*}

(2) Here the verification goes as follows:
\begin{eqnarray*}
W_{ia}W_{jb}
&=&\left(\sum_d<i,b+d>E_{a+b+d,b+d}\right)\left(\sum_d<j,d>E_{b+d,d}\right)\\
&=&\sum_d<i,b><i+j,d>E_{a+b+d,d}=<i,b>W_{i+j,a+b}
\end{eqnarray*}

(3,4) By combining the above two formulae, we obtain:
\begin{eqnarray*}
W_{ia}W_{jb}^*&=&<j,b>W_{ia}W_{-j,-b}=<j,b><i,-b>W_{i-j,a-b}\\
W_{ia}^*W_{jb}&=&<i,a>W_{-i,-a}W_{jb}=<i,a><-i,b>W_{j-i,b-a}
\end{eqnarray*}

But this gives the formulae in the statement, and we are done.
\end{proof}

Observe that, with $n=|H|$, we can use an isomorphism $l^2(\widehat{H})\simeq\mathbb C^n$ as to view each $W_{ia}$ as a usual matrix, $W_{ia}\in M_n(\mathbb C)$, and hence as a usual unitary, $W_{ia}\in U_n$.

Given a vector $\xi$, we denote by $Proj(\xi)$ the orthogonal projection onto $\mathbb C\xi$.

Now let $N=n^2$, and consider Wang's quantum permutation algebra $C(S_N^+)$, with standard generators denoted $w_{ia,jb}$, using double indices. Following \cite{bne}, we have:

\begin{theorem}
Given a closed subgroup $E\subset U_n$, we have a representation
$$\pi_H:C(S_N^+)\to M_N(C(E))\quad:\quad w_{ia,jb}\to[U\to Proj(W_{ia}UW_{jb}^*)]$$
where $n=|H|,N=n^2$, and where $W_{ia}$ are the Weyl matrices associated to $H$.
\end{theorem}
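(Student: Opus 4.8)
The plan is to verify that the proposed entries $u_{ia,jb}=[U\mapsto Proj(W_{ia}UW_{jb}^*)]$ form a magic unitary over $C(E)$, and then to invoke the universal property of $C(S_N^+)$ from Definition 3.1. The right framework is to view $M_n(\mathbb C)$ as a Hilbert space under the Hilbert-Schmidt scalar product $\langle A,B\rangle=Tr(A^*B)$, which has dimension $n^2=N$; under the isomorphism $M_n(\mathbb C)\simeq\mathbb C^N$ each $W_{ia}UW_{jb}^*$ becomes a vector of this space, of norm $\sqrt n\neq0$, and $Proj(W_{ia}UW_{jb}^*)\in M_N(\mathbb C)$ is a genuine rank-one orthogonal projection. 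This already yields the relations $p=p^2=p^*$ pointwise in $U$, and since $U\mapsto W_{ia}UW_{jb}^*$ is continuous with everywhere nonzero values, each $u_{ia,jb}$ is a well-defined projection in $M_N(C(E))$.

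The computational heart of the proof is a single orthogonality fact: the Weyl matrices form an orthogonal basis of $M_n(\mathbb C)$. I would establish this using Proposition 3.3 (4) together with the trace identity $Tr(W_{ia})=n\,\delta_{i,0}\delta_{a,0}$, which itself comes from orthogonality of characters. This gives $\langle W_{ia},W_{jb}\rangle=n\,\delta_{ij}\delta_{ab}$, and since there are exactly $n^2=N$ Weyl matrices, the family $\{W_{ia}/\sqrt n\}$ is an orthonormal basis of $M_n(\mathbb C)$.

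Next I would use that left and right multiplication by unitaries preserves the Hilbert-Schmidt scalar product, which reduces the scalar products of the vectors appearing in any fixed row or column to those of the Weyl matrices. Indeed, fixing a row $(i,a)$ and cancelling $W_{ia}^*W_{ia}=U^*U=1$ gives $\langle W_{ia}UW_{jb}^*,W_{ia}UW_{j'b'}^*\rangle=Tr(W_{jb}W_{j'b'}^*)=n\,\delta_{jj'}\delta_{bb'}$, and fixing a column $(j,b)$ and cancelling $W_{jb}^*W_{jb}=UU^*=1$ gives $\langle W_{ia}UW_{jb}^*,W_{i'a'}UW_{jb}^*\rangle=Tr(W_{ia}^*W_{i'a'})=n\,\delta_{ii'}\delta_{aa'}$. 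Thus, for each $U$, the $n^2$ vectors in a given row, and likewise in a given column, are pairwise orthogonal of equal norm, hence form an orthogonal basis of the $N$-dimensional space $M_n(\mathbb C)$. Consequently the associated rank-one projections are mutually orthogonal and sum to $1_N$, which is exactly the row and column sum condition $\sum_{jb}u_{ia,jb}=\sum_{ia}u_{ia,jb}=1$.

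Having checked that $u=(u_{ia,jb})$ is magic, the universal property of $C(S_N^+)$ produces the morphism $\pi_H$, as claimed. The only real obstacle is the bookkeeping in the scalar-product computations, but the conceptual content is entirely carried by the two elementary facts that the Weyl matrices are Hilbert-Schmidt orthogonal and that unitary multiplication is an isometry for the Hilbert-Schmidt norm; once these are in place, all three magic unitary conditions follow automatically.
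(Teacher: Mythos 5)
Your proposal is correct and follows essentially the same route as the paper: both prove that the Weyl matrices form an orthogonal basis of $M_n(\mathbb C)$ with respect to the Hilbert--Schmidt scalar product (via the trace identity and Proposition 3.3), deduce that each row and column of $\xi_{ia,jb}=W_{ia}UW_{jb}^*$ is an orthogonal basis, and conclude that the rank-one projections form a magic unitary, invoking the universal property of $C(S_N^+)$. Your version merely spells out the unitary-invariance cancellations that the paper leaves implicit.
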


\begin{proof}
The Weyl matrices being given by $W_{ia}:e_b\to<i,b>e_{a+b}$, we have:
$$tr(W_{ia})=\begin{cases}
1&{\rm if}\ (i,a)=(0,0)\\
0&{\rm if}\ (i,a)\neq(0,0)
\end{cases}$$

Together with the formulae in Proposition 3.3, this shows that the Weyl matrices are pairwise orthogonal with respect to the scalar product $<x,y>=tr(x^*y)$ on $M_n(\mathbb C)$. Thus, these matrices form altogether an orthogonal basis of $M_n(\mathbb C)$, consisting of unitaries:
$$W=\left\{W_{ia}\Big|i\in H,a\in\widehat{H}\right\}$$

Thus, each row and each column of the matrix $\xi_{ia,jb}=W_{ia}UW_{jb}^*$ is an orthogonal basis of $M_n(\mathbb C)$, and so the corresponding projections form a magic unitary, as claimed.
\end{proof}

The above models, with $E=U_n$, were introduced and studied in \cite{bco} for the group $H=\mathbb Z_2$, where the Weyl matrices are the Pauli matrices, and in \cite{bne} in general. 

The main result in \cite{bco} was the stationarity of the model. We will generalize here this fact, to the case where $H$ is arbitrary, and where $W\subset E\subset U_n$ is arbitrary as well.

As for the main result in \cite{bne}, this was the computation of the law of the main character, for $E=U_n$, which turned out to be the same one as for $PU_n=U_n/\mathbb T$. As explained there, this suggests that the image of the model should be a twist, $C(PU_n)^\sigma$. We have no advances here, but our results below suggest that such a result should be valid for an arbitary compact group $W\subset E\subset U_n$, with the image being conjecturally $C(PE)^\sigma$.

\section{Stationarity}

In order to investigate stationarity questions for the Weyl matrix models, the idea will be to first compute the matrices $T_p=T_e$, and then to prove that we have $T_p^2=T_p$.

We recall that our scalar products, usually of type $<x,y>=tr(x^*y)$, are by definition linear at right. With this convention, we have the following well-known result:

\begin{proposition}
With $T=Proj(x_1)\ldots Proj(x_p)$ and $||x_i||=1$ we have 
$$<\xi,T\eta>=<\xi,x_1><x_1,x_2>\ldots<x_{p-1},x_p><x_p,\eta>$$
for any $\xi,\eta$. In particular, $Tr(T)=<x_1,x_2><x_2,x_3>\ldots<x_p,x_1>$.
\end{proposition}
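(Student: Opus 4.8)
The plan is to reduce everything to the elementary action of a single rank-one projection, and then telescope. First I would record that, with the stated convention that $<\cdot,\cdot>$ is linear in the right variable, the orthogonal projection onto $\mathbb C x$ for a unit vector $x$ acts by the formula $Proj(x)\eta=<x,\eta>x$. This is worth checking explicitly, since the conjugate-linearity sits in the left slot: idempotency uses $<x,x>=1$, while self-adjointness relies on the conjugate symmetry $\overline{<x,\zeta>}=<\zeta,x>$. Getting this slot convention right is really the only place where any care is needed.

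Second, I would apply the projections to $\eta$ from right to left. Writing $T\eta=Proj(x_1)\ldots Proj(x_p)\eta$, the innermost application gives $Proj(x_p)\eta=<x_p,\eta>x_p$, and each further application of $Proj(x_k)$ to a scalar multiple of $x_{k+1}$ produces a factor $<x_k,x_{k+1}>$ while redirecting the vector onto $x_k$. After all $p$ steps one is left with $T\eta=<x_1,x_2><x_2,x_3>\ldots<x_{p-1},x_p><x_p,\eta>\,x_1$, a scalar multiple of $x_1$. Pairing with $\xi$ on the left contributes the extra factor $<\xi,x_1>$ (again by right-linearity), and since all these quantities are ordinary complex numbers they may be reordered freely to match the stated product. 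A clean induction on $p$ can replace the informal telescoping if a fully rigorous write-up is wanted.

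Finally, for the trace I would specialize $\xi=\eta=e_i$ and sum over an orthonormal basis $\{e_i\}$, using $Tr(T)=\sum_i<e_i,Te_i>$. The interior factors $<x_1,x_2>\ldots<x_{p-1},x_p>$ do not depend on $i$ and pull outside the sum, leaving $\sum_i<e_i,x_1><x_p,e_i>$. By expanding $x_1=\sum_i<e_i,x_1>e_i$ in the basis and using right-linearity of the scalar product, this sum collapses to $<x_p,x_1>$, which supplies the last factor and closes the cycle. The main obstacle, such as it is, is purely the bookkeeping with the conjugate-linear slot; there is no analytic content beyond idempotence of the projections and the resolution of the identity.
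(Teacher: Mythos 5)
Your proof is correct and follows essentially the same route as the paper's: the rank-one formula $Proj(x)\eta=x<x,\eta>$, the right-to-left telescoping to get $T\eta$ as a scalar multiple of $x_1$, and the trace obtained by summing $<e_i,Te_i>$ over an orthonormal basis. Your write-up merely adds some extra care about the conjugate-linear slot and the final collapse $\sum_i<e_i,x_1><x_p,e_i>=<x_p,x_1>$, which the paper leaves implicit.
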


\begin{proof}
For $||x||=1$ we have $Proj(x)\eta=x<x,\eta>$, and this gives:
\begin{eqnarray*}
T\eta
&=&Proj(x_1)\ldots Proj(x_p)\eta\\
&=&Proj(x_1)\ldots Proj(x_{p-1})x_p<x_p,\eta>\\
&=&Proj(x_1)\ldots Proj(x_{p-2})x_{p-2}<x_{p-1},x_p><x_p,\eta>\\
&=&\ldots\\
&=&x_1<x_1,x_2>\ldots<x_{p-1},x_p><x_p,\eta>
\end{eqnarray*}

Now by taking the scalar product with $\xi$, this gives the first assertion. As for the second assertion, this follows from the first assertion, by summing over $\xi=\eta=e_i$.
\end{proof}

Now back to the Weyl matrix models, let us first compute $T_p$. We have:

\begin{proposition}
We have the formula
\begin{eqnarray*}
(T_p)_{ia,jb}
&=&\frac{1}{N}<i_1,a_1-a_2>\ldots<i_p,a_p-a_1><j_2,b_2-b_1>\ldots<j_1,b_1-b_p>\\
&&\int_Etr(W_{i_2-i_1,a_2-a_1}UW_{j_1-j_2,b_1-b_2}U^*)\ldots tr(W_{i_1-i_p,a_1-a_p}UW_{j_p-j_1,b_p-b_1}U^*)dU
\end{eqnarray*}
with all the indices varying in a cyclic way.
\end{proposition}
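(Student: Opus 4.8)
The plan is to compute $(T_p)_{ia,jb}$ directly from its definition in Theorem 1.7(3), namely
$$(T_p)_{ia,jb}=\left(tr\otimes\int_E\right)\left(W_{i_1a_1,j_1b_1}^{(\pi)}\ldots W_{i_pa_p,j_pb_p}^{(\pi)}\right),$$
where I abbreviate by $W^{(\pi)}_{ia,jb}$ the image $\pi_H(w_{ia,jb})=[U\mapsto Proj(W_{ia}UW_{jb}^*)]$. Since the model lands in $M_N(C(E))$, the matrix $T_p$ is obtained by integrating over $E$ the normalized trace of a product of $p$ rank-one projections $Proj(x_r)$, with $x_r=W_{i_ra_r}UW_{j_rb_r}^*$. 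The key observation is that each $x_r$ has unit norm, because $W_{ia},W_{jb}$ are unitaries (Proposition 3.3) and conjugation preserves the Hilbert-Schmidt norm; so Proposition 4.1 applies verbatim.

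First I would invoke the trace formula from Proposition 4.1, which gives
$$Tr\big(Proj(x_1)\ldots Proj(x_p)\big)=<x_1,x_2><x_2,x_3>\ldots<x_p,x_1>,$$
with indices cyclic. Passing from $Tr$ to $tr$ on $M_N(\mathbb C)$ (where $N=n^2$) introduces the prefactor $1/N$ that appears in the statement. Thus
$$(T_p)_{ia,jb}=\frac{1}{N}\int_E <x_1,x_2><x_2,x_3>\ldots<x_p,x_1>\,dU,$$
and everything reduces to evaluating the individual Hilbert-Schmidt inner products $<x_r,x_{r+1}>=tr\big((W_{i_ra_r}UW_{j_rb_r}^*)^*\,W_{i_{r+1}a_{r+1}}UW_{j_{r+1}b_{r+1}}^*\big)$, again cyclically.

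The main computational step — and the place where the scalar factors in the statement are produced — is to simplify this inner product using the Weyl commutation relations. Expanding the adjoint gives $tr\big(W_{j_rb_r}U^*W_{i_ra_r}^*W_{i_{r+1}a_{r+1}}UW_{j_{r+1}b_{r+1}}^*\big)$, and I would then apply Proposition 3.3(4) to collapse $W_{i_ra_r}^*W_{i_{r+1}a_{r+1}}$ into a single Weyl matrix $W_{i_{r+1}-i_r,\,a_{r+1}-a_r}$ times a Fourier scalar $<i_r,a_r-a_{r+1}>$, and symmetrically use Proposition 3.3 on the two remaining $W_{j}$ factors (which sit on the opposite side of the $U$'s) to produce $W_{j_r-j_{r+1},\,b_r-b_{r+1}}$ together with a second Fourier scalar. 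Collecting the scalars over the cyclic product yields exactly the two chains of couplings $<i_1,a_1-a_2>\ldots<i_p,a_p-a_1>$ and $<j_2,b_2-b_1>\ldots<j_1,b_1-b_p>$ displayed in the statement, while the residual trace factors assemble into $\int_E tr(W_{i_2-i_1,a_2-a_1}UW_{j_1-j_2,b_1-b_2}U^*)\ldots\,dU$, with each trace of the form $tr(W_{\cdot}U W_{\cdot}U^*)$ after a cyclic permutation inside the trace to regroup the $U$, $U^*$ pair belonging to consecutive factors.

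I expect the hard part to be purely bookkeeping rather than conceptual: keeping the two index families $(i,a)$ and $(j,b)$ cleanly separated, getting the signs and directions of all the differences $a_{r+1}-a_r$ versus $a_r-a_{r+1}$ consistent, and correctly matching up which $U$ pairs with which $U^*$ when passing from the linked-product form of Proposition 4.1 to the displayed product of two-$W$ traces. The one genuine subtlety is that the trace is over the full product, so the $U$ from factor $x_r$ and the $U^*$ from factor $x_{r+1}$ are what combine, forcing the index shift by one between the $i$-chain and the $j$-chain — this is precisely the asymmetry visible in the statement, where the $i$-factors are indexed $i_2-i_1,\ldots,i_1-i_p$ and the $j$-factors $j_1-j_2,\ldots,j_p-j_1$. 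Once this indexing is pinned down, the Fourier scalars factor out of the integral and the claimed formula follows.
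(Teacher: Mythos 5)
Your proposal is correct and follows essentially the same route as the paper: both apply the linked-product trace formula of Proposition 4.1 to the rank-one projections $Proj(W_{i_ra_r}UW_{j_rb_r}^*)$, then reduce each Hilbert--Schmidt inner product $<x_r,x_{r+1}>$ via cyclicity of the trace and the Weyl relations of Proposition 3.3, yielding exactly the couplings $<i_r,a_r-a_{r+1}>$, $<j_{r+1},b_{r+1}-b_r>$ and the traces $tr(W_{i_{r+1}-i_r,a_{r+1}-a_r}UW_{j_r-j_{r+1},b_r-b_{r+1}}U^*)$ that appear in the statement. Your bookkeeping of the index shift between the $i$-chain and the $j$-chain, and of the $1/N$ normalization, matches the paper's computation.
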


\begin{proof}
By using the trace formula in Proposition 4.1 above, we obtain:
\begin{eqnarray*}
(T_p)_{ia,jb}
&=&\left(tr\otimes\int_E\right)\left(Proj(W_{i_1a_1}UW_{j_1b_1}^*)\ldots Proj(W_{i_pa_p}UW_{j_pb_p}^*)\right)\\
&=&\frac{1}{N}\int_E<W_{i_1a_1}UW_{j_1b_1}^*,W_{i_2a_2}UW_{j_2b_2}^*>\ldots<W_{i_pa_p}UW_{j_pb_p}^*,W_{i_1a_1}UW_{j_1b_1}^*>dU
\end{eqnarray*}

In order to compute now the scalar products, observe that we have:
\begin{eqnarray*}
<W_{ia}UW_{jb}^*,W_{kc}UW_{ld}^*>
&=&tr(W_{jb}U^*W_{ia}^*W_{kc}UW_{ld}^*)\\
&=&tr(W_{ia}^*W_{kc}UW_{ld}^*W_{jb}U^*)\\
&=&<i,a-c><l,d-b>tr(W_{k-i,c-a}UW_{j-l,b-d}U^*)
\end{eqnarray*}

By plugging these quantities into the formula of $T_p$, we obtain the result.
\end{proof}

Consider now the Weyl group $W=\{W_{ia}\}\subset U_n$, that we already met in the proof of Theorem 3.4 above. We have the following result:

\begin{theorem}
For any compact group $W\subset E\subset U_n$, the model
$$\pi_H:C(S_N^+)\to M_N(C(E))\quad:\quad w_{ia,jb}\to[U\to Proj(W_{ia}UW_{jb}^*)]$$
constructed above is stationary on its image, in the sense of Theorem 1.7.
\end{theorem}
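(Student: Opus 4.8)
The plan is to verify the stationarity criterion of Theorem 1.7 (3), namely that $T_p^2=T_p$ for all $p$, using the explicit expression for $T_p$ obtained in Proposition 4.2. Writing $(T_p^2)_{ia,jb}=\sum_{kc}(T_p)_{ia,kc}(T_p)_{kc,jb}$ and substituting the formula of Proposition 4.2 into each factor, I would introduce two independent Haar variables $U,V\in E$, so that the first factor carries the integral over $U$ and the second the integral over $V$. The key structural observation is that, matching the two products term by term, the intermediate Weyl matrix occurring in $(T_p)_{ia,kc}$ at position $s$ is $W_{k_s-k_{s+1},c_s-c_{s+1}}$, whereas the one occurring in $(T_p)_{kc,jb}$ at the same position is $W_{k_{s+1}-k_s,c_{s+1}-c_s}$, that is, the adjoint of the former up to the scalar given by Proposition 3.3 (1). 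This is exactly the alignment needed to contract the two families of traces against one another.

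Next I would carry out the summation over the intermediate double indices $(k,c)$, where two mechanisms combine. First, the Weyl matrices form an orthogonal basis of $M_n(\mathbb C)$, a fact already used in the proof of Theorem 3.4, which gives the expansion $X=\sum_{ia}\frac1n\,tr(W_{ia}^*X)\,W_{ia}$ for every $X\in M_n(\mathbb C)$; applying this identity to each matched pair of traces collapses them into a single trace, thereby coupling the two integration variables $U$ and $V$. Second, the explicit Fourier phases of Proposition 4.2 depend on the individual group indices, and summing the characters $k_s\mapsto<k_s,\cdot>$ over $H$ (and the analogous sums over $\widehat H$) produces orthogonality relations that pin down the remaining free indices. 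The delicate point is that the scalar $<i,a>$ relating $W_{-i,-a}$ to $W_{ia}^*$ must be kept throughout, and merged with the phase prefactors of $T_p$.

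The outcome of this computation should be a single integral over $E$ of a product of traces, in which the merged variable has the form $UV$, possibly with Weyl-matrix insertions, and whose phase prefactor recombines exactly into that of $(T_p)_{ia,jb}$. To conclude I would invoke the invariance of the Haar measure of $E$: since $UV$ is Haar-distributed on $E$ when $U,V$ are, and, crucially, since the Weyl group $W$ is contained in $E$, so that left and right multiplication by Weyl matrices preserves $\int_E$, the double integral collapses to the single integral defining $(T_p)_{ia,jb}$. This yields $T_p^2=T_p$, and Theorem 1.7 then gives stationarity on the image. I expect the main obstacle to be precisely the phase bookkeeping of the middle step: one must check that the explicit phases of Proposition 4.2, the scalars coming from Proposition 3.3 (1), and the characters summed over $H$ and $\widehat H$ all recombine into the prefactor of $T_p$, with the correct normalization $1/N$. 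This is the Weyl-model analogue of the ``$vw$ uniform and $\bar w$ uniform'' argument used in the proof of Theorem 2.4, with the hypothesis $W\subset E$ playing the role there played by the group structure and the conjugation-stability of $H$.
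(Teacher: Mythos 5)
Your proposal follows essentially the same route as the paper's proof: verifying the criterion of Theorem 1.7 (3) via the formula of Proposition 4.2, contracting the intermediate sum over $(k,c)$ using the orthogonality/completeness of the Weyl matrices together with character orthogonality sums, and concluding by Haar invariance of $\int_E$, with the hypothesis $W\subset E$ entering exactly where you predict --- the paper's merged variable is $S_{ia}=UW_{ia}V$, whose Haar distribution requires $W_{ia}\in E$. The only difference is organizational: the paper carries out the trace contraction entry-wise (the coupling of the difference indices $k_s-k_{s+1}$, $c_s-c_{s+1}$ across positions is precisely what produces the residual sum over $(i,a)$ and the Weyl-matrix insertions you anticipate), but this is the same mechanism you describe.
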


\begin{proof}
We must prove that we have $T_p^2=T_p$. We have:
\begin{eqnarray*}
(T_p^2)_{ia,jb}
&=&\sum_{kc}(T_p)_{ia,kc}(T_p)_{kc,jb}\\
&=&\frac{1}{N^2}\sum_{kc}<i_1,a_1-a_2>\ldots<i_p,a_p-a_1><k_2,c_2-c_1>\ldots<k_1,c_1-c_p>\\
&&<k_1,c_1-c_2>\ldots<k_p,c_p-c_1><j_2,b_2-b_1>\ldots<j_1,b_1-b_p>\\
&&\int_Etr(W_{i_2-i_1,a_2-a_1}UW_{k_1-k_2,c_1-c_2}U^*)\ldots tr(W_{i_1-i_p,a_1-a_p}UW_{k_p-k_1,c_p-c_1}U^*)dU\\
&&\int_Etr(W_{k_2-k_1,c_2-c_1}VW_{j_1-j_2,b_1-b_2}V^*)\ldots tr(W_{k_1-k_p,c_1-c_p}VW_{j_p-j_1,b_p-b_1}V^*)dV
\end{eqnarray*}

By rearranging the terms, this formula becomes:
\begin{eqnarray*}
(T_p^2)_{ia,jb}
&=&\frac{1}{N^2}<i_1,a_1-a_2>\ldots<i_p,a_p-a_1><j_2,b_2-b_1>\ldots<j_1,b_1-b_p>\\
&&\int_E\int_E\sum_{kc}<k_1-k_2,c_1-c_2>\ldots<k_p-k_1,c_p-c_1>\\
&&tr(W_{i_2-i_1,a_2-a_1}UW_{k_1-k_2,c_1-c_2}U^*)tr(W_{k_2-k_1,c_2-c_1}VW_{j_1-j_2,b_1-b_2}V^*)\\
&&\hskip50mm\ldots\ldots\\
&&tr(W_{i_1-i_p,a_1-a_p}UW_{k_p-k_1,c_p-c_1}U^*)tr(W_{k_1-k_p,c_1-c_p}VW_{j_p-j_1,b_p-b_1}V^*)dUdV
\end{eqnarray*}

Let us denote by $I$ the above double integral. By using $W_{kc}^*=<k,c>W_{-k,-c}$ for each of the couplings, and by moving as well all the $U^*$ variables to the left, we obtain:
\begin{eqnarray*}
I
&=&\int_E\int_E\sum_{kc}tr(U^*W_{i_2-i_1,a_2-a_1}UW_{k_1-k_2,c_1-c_2})tr(W_{k_1-k_2,c_1-c_2}^*VW_{j_1-j_2,b_1-b_2}V^*)\\
&&\hskip50mm\ldots\ldots\\
&&tr(U^*W_{i_1-i_p,a_1-a_p}UW_{k_p-k_1,c_p-c_1})tr(W_{k_p-k_1,c_p-c_1}^*VW_{j_p-j_1,b_p-b_1}V^*)dUdV
\end{eqnarray*}

In order to perform now the sums, we use the following formula:
\begin{eqnarray*}
tr(AW_{kc})tr(W_{kc}^*B)
&=&\frac{1}{N}\sum_{qrst}A_{qr}(W_{kc})_{rq}(W^*_{kc})_{st}B_{ts}\\
&=&\frac{1}{N}\sum_{qrst}A_{qr}<k,q>\delta_{r-q,c}<k,-s>\delta_{t-s,c}B_{ts}\\
&=&\frac{1}{N}\sum_{qs}<k,q-s>A_{q,q+c}B_{s+c,s}
\end{eqnarray*}

If we denote by $A_x,B_x$ the variables which appear in the formula of $I$, we have:
\begin{eqnarray*}
I
&=&\frac{1}{N^p}\int_E\int_E\sum_{kcqs}<k_1-k_2,q_1-s_1>\ldots<k_p-k_1,q_p-s_p>\\
&&(A_1)_{q_1,q_1+c_1-c_2}(B_1)_{s_1+c_1-c_2,s_1}\ldots (A_p)_{q_p,q_p+c_p-c_1}(B_p)_{s_p+c_p-c_1,s_p}\\
&=&\frac{1}{N^p}\int_E\int_E\sum_{kcqs}<k_1,q_1-s_1-q_p+s_p>\ldots<k_p,q_p-s_p-q_{p-1}+s_{p-1}>\\
&&(A_1)_{q_1,q_1+c_1-c_2}(B_1)_{s_1+c_1-c_2,s_1}\ldots (A_p)_{q_p,q_p+c_p-c_1}(B_p)_{s_p+c_p-c_1,s_p}
\end{eqnarray*}

Now observe that we can perform the sums over $k_1,\ldots,k_p$. We obtain in this way a multiplicative factor $n^p$, along with the condition $q_1-s_1=\ldots=q_p-s_p$. Thus we must have $q_x=s_x+a$ for a certain $a$, and the above formula becomes:
$$I=\frac{1}{n^p}\int_E\int_E\sum_{csa}(A_1)_{s_1+a,s_1+c_1-c_2+a}(B_1)_{s_1+c_1-c_2,s_1}\ldots(A_p)_{s_p+a,s_p+c_p-c_1+a}(B_p)_{s_p+c_p-c_1,s_p}$$

Consider now the variables $r_x=c_x-c_{x+1}$, which altogether range over the set $Z$ of multi-indices having sum 0. By replacing the sum over $c_x$ with the sum over $r_x$, which creates a multiplicative $n$ factor, we obtain the following formula:
$$I=\frac{1}{n^{p-1}}\int_E\int_E\sum_{r\in Z}\sum_{sa}(A_1)_{s_1+a,s_1+r_1+a}(B_1)_{s_1+r_1,s_1}\ldots(A_p)_{s_p+a,s_p+r_p+a}(B_p)_{s_p+r_p,s_p}$$

Since for an arbitrary multi-index $r$ we have $\delta_{\sum_ir_i,0}=\frac{1}{n}\sum_i<i,r_1>\ldots<i,r_p>$, we can replace the sum over $r\in Z$ by a full sum, as follows:
\begin{eqnarray*}
I
&=&\frac{1}{n^p}\int_E\int_E\sum_{rsia}<i,r_1>(A_1)_{s_1+a,s_1+r_1+a}(B_1)_{s_1+r_1,s_1}\\
&&\hskip40mm\ldots\ldots\\
&&\hskip20mm<i,r_p>(A_p)_{s_p+a,s_p+r_p+a}(B_p)_{s_p+r_p,s_p}
\end{eqnarray*}

In order to ``absorb'' now the indices $i,a$, we can use the following formula:
\begin{eqnarray*}
W_{ia}^*AW_{ia}
&=&\left(\sum_b<i,-b>E_{b,a+b}\right)\left(\sum_{bc}E_{a+b,a+c}A_{a+b,a+c}\right)\left(\sum_c<i,c>E_{a+c,c}\right)\\
&=&\sum_{bc}<i,c-b>E_{bc}A_{a+b,a+c}
\end{eqnarray*}

Thus we have $(W_{ia}^*AW_{ia})_{bc}=<i,c-b>A_{a+b,a+c}$, and our formula becomes:
\begin{eqnarray*}
I
&=&\frac{1}{n^p}\int_E\int_E\sum_{rsia}(W_{ia}^*A_1W_{ia})_{s_1,s_1+r_1}(B_1)_{s_1+r_1,s_1}\ldots(W_{ia}^*A_pW_{ia})_{s_p,s_p+r_p}(B_p)_{s_p+r_p,s_p}\\
&=&\int_E\int_E\sum_{ia}tr(W_{ia}^*A_1W_{ia}B_1)\ldots\ldots tr(W_{ia}^*A_pW_{ia}B_p)
\end{eqnarray*}

Now by replacing $A_x,B_x$ with their respective values, we obtain:
\begin{eqnarray*}
I
&=&\int_E\int_E\sum_{ia}tr(W_{ia}^*U^*W_{i_2-i_1,a_2-a_1}UW_{ia}VW_{j_1-j_2,b_1-b_2}V^*)\\
&&\hskip30mm\ldots\ldots\\
&&tr(W_{ia}^*U^*W_{i_1-i_p,a_1-a_p}UW_{ia}VW_{j_p-j_1,b_p-b_1}V^*)dUdV
\end{eqnarray*}

By moving the $W_{ia}^*U^*$ variables at right, we obtain, with $S_{ia}=UW_{ia}V$:
\begin{eqnarray*}
I
&=&\sum_{ia}\int_E\int_Etr(W_{i_2-i_1,a_2-a_1}S_{ia}W_{j_1-j_2,b_1-b_2}S_{ia}^*)\\
&&\hskip30mm\ldots\ldots\\
&&tr(W_{i_1-i_p,a_1-a_p}S_{ia}W_{j_p-j_1,b_p-b_1}S_{ia}^*)dUdV
\end{eqnarray*}

Now since $S_{ia}$ is Haar distributed when $U,V$ are Haar distributed, we obtain:
$$I=N\int_E\int_Etr(W_{i_2-i_1,a_2-a_1}UW_{j_1-j_2,b_1-b_2}U^*)\ldots tr(W_{i_1-i_p,a_1-a_p}UW_{j_p-j_1,b_p-b_1}U^*)dU$$

But this is exactly $N$ times the integral in the formula of $(T_p)_{ia,jb}$, from Proposition 4.2 above. Since the $N$ factor cancels with one of the two $N$ factors that we found in the beginning of the proof, when first computing $(T_p^2)_{ia,jb}$, we are done.
\end{proof}

As explained at the end of the previous section, the above result raises a number of interesting algebraic questions. Analytically speaking now, a main problem is that of converting the formula of $T_p$ in Proposition 4.2 into a Weingarten type formula, by using the Fourier transform tricks in \cite{bne}, and the classical Weingarten formula \cite{csn}, \cite{wei}.

Finally, at both the combinatorial and probabilistic level, the results from \cite{bne} and from here have a certain similarity with the theory of certain statistical mechanical models, of Ising and Potts type \cite{bax}, which is waiting as well to be understood.

\section{Universal models}

We discuss in what follows a number of universal ``rank 1'' matrix model constructions for the quantum permutation groups, following \cite{bne}. First, we have:

\begin{definition}
A flat magic unitary is a magic unitary of the form
$$u\in M_N(M_N(\mathbb C))$$
with each $u_{ij}\in M_N(\mathbb C)$ being a rank $1$ projection.
\end{definition}

As a basic example, the Weyl matrix models come from flat magic unitaries. The terminology comes from the fact that the matrix $d_{ij}=tr(u_{ij})$, which is bistochastic, with sum 1 on each row and each column, must be the ``flat'' matrix, $d=(\frac{1}{N})_{ij}$.

In general, if we write $u_{ij}=Proj(\xi_{ij})$, with $||\xi_{ij}||=1$, uniquely determined up to parameters $\tau_{ij}\in\mathbb T$, then the array of vectors $\xi=(\xi_{ij})$ is a ``magic basis'', in the sense that each of its rows and  columns is an orthonormal basis of $\mathbb C^N$. Conversely, given a magic basis $\xi=(\xi_{ij})$, the projections $u_{ij}=Proj(\xi_{ij})$ form a flat magic unitary.

These basic facts are best viewed in the following way:

\begin{definition}
Associated to any $N\in\mathbb N$ are compact spaces $X_N,\widetilde{X}_N$ as follows,
$$\begin{matrix}
\widetilde{X}_N&\subset&M_N(S^{N-1}_\mathbb C)\\
\\
\downarrow&&\downarrow\\
\\
X_N&\subset&M_N(P^{N-1}_\mathbb C)
\end{matrix}$$
consisting of all $N\times N$ flat magic unitaries $(u_{ij})$, and all $N\times N$ magic bases $(\xi_{ij})$.
\end{definition}

Here $S^{N-1}_\mathbb C\subset\mathbb C^N$ is the unit sphere, and $P^{N-1}_\mathbb C=S^{N-1}_\mathbb C/\mathbb T$ is the corresponding projective space, whose elements can be identified with the rank 1 projections in $M_N(\mathbb C)$. The horizontal inclusions come from definitions, the map on the left is given by $\xi\to u$ with $u_{ij}=Proj(\xi_{ij})$, and the map on the right is induced by the canonical quotient map $S^{N-1}_\mathbb C\to P^{N-1}_\mathbb C$. Observe that $\widetilde{X}_N$ appears as lift of $X_N$, via the map on the right.

Now back to our matrix model problematics, we have:

\begin{definition}
The representation
$$\pi_N:C(S_N^+)\to M_N(C(X_N))$$
given by $w_{ij}\to[u\to u_{ij}]$ is called universal flat model of $C(S_N^+)$. 
\end{definition}

Observe that the above matrix model is indeed universal, among the flat matrix models for $C(S_N^+)$. This is indeed a trivial statement, which follows from definitions.

We will construct now a universal flat model for any closed subgroup $G\subset S_N^+$. Let us first discuss the case of the usual permutation group, $S_N\subset S_N^+$. We agree to denote by $\sim$ the proportionality of vectors, and by $\perp$, their orthogonality. We have:

\begin{definition}
Associated to any $N\in\mathbb N$ are spaces $X_N^\circ,\widetilde{X}^\circ_N$ as follows,
$$\begin{matrix}
\widetilde{X}^\circ_N&\subset&\widetilde{X}_N&\subset&M_N(S^{N-1}_\mathbb C)\\
\\
\downarrow&&\downarrow&&\downarrow\\
\\
X_N^\circ&\subset&X_N&\subset&M_N(P^{N-1}_\mathbb C)
\end{matrix}$$
consisting of all $N\times N$ flat magic unitaries $(u_{ij})$ with commuting entries, and all $N\times N$ magic bases $(\xi_{ij})$ having the property $[\xi_{ij}\sim\xi_{kl}$ or $\xi_{ij}\perp\xi_{kl}]$, for any $i,j,k,l$.
\end{definition}

Observe that we have indeed maps as above, the main observation here being the fact that $Proj(\xi),Proj(\eta)$ commute precisely when $\xi,\eta$ are proportional, or orthogonal.

The above spaces have in fact a very simple structure. Consider the following spaces, with the elements of $P^{N-1}_\mathbb C$ being regarded as usual as rank one projections:
\begin{eqnarray*}
(S^{N-1}_\mathbb C)^{N,\perp}&=&\left\{(x_1,\ldots,x_N)\in (S^{N-1}_\mathbb C)^N\Big|x_i\perp x_j,\forall i\neq j\right\}\\ 
(P^{N-1}_\mathbb C)^{N,\perp}&=&\left\{(u_1,\ldots,u_N)\in (P^{N-1}_\mathbb C)^N\Big|u_iu_j=0,\forall i\neq j\right\}
\end{eqnarray*}

Let us recall as well that a Latin square is a square matrix $L\in M_N(1,\ldots,N)$, all whose rows and columns are permutations of $1,\ldots,N$. We call $L$ half-normalized if its first row is $1,\ldots,N$, and normalized is both its first row and first column are $1,\ldots,N$.

Finally, consider the universal flat representation of $C(S_N)$, namely:
$$\pi_N^\circ:C(S_N)\to M_N(C(X_N^\circ))\quad:\quad w_{ij}\to[u\to u_{ij}]$$

With these conventions, we have the following result:

\begin{theorem}
We have identifications as follows,
$$X_N^\circ=(P^{N-1}_\mathbb C)^{N,\perp}\times L_N\quad,\quad \widetilde{X}_N^\circ=(S^{N-1}_\mathbb C)^{N,\perp}\times L_N\times\mathbb T^{N(N-1)}$$
where $L_N$ is the set of half-normalized $N\times N$ Latin squares, and $\pi_N^\circ$ is stationary.
\end{theorem}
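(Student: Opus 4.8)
The plan is to first establish the two set-level identifications, which are essentially bookkeeping around the observation (already recorded in the text) that two rank-$1$ projections $Proj(\xi),Proj(\eta)$ commute precisely when $\xi,\eta$ are proportional or orthogonal. Thus the structure of $\widetilde X_N^\circ$ is the real content, and $X_N^\circ$ will follow by dividing out phases. I would fix a magic basis $(\xi_{ij})$ satisfying the proportional-or-orthogonal condition and use its first row as a reference orthonormal basis, $v_j:=\xi_{1j}$. For any entry $\xi_{ij}$, the condition applied to the pair $(i,j),(1,m)$ says $\xi_{ij}$ is proportional or orthogonal to each $v_m$; since $\xi_{ij}$ is a unit vector it cannot be orthogonal to all of them, and being proportional to one forces all other components to vanish. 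Hence each $\xi_{ij}$ is proportional to a unique $v_{L(i,j)}$, say $\xi_{ij}=\tau_{ij}v_{L(i,j)}$ with $\tau_{ij}\in\mathbb T$.

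I would then check that $L=(L(i,j))$ is a half-normalized Latin square: each row and each column of $(\xi_{ij})$ being an orthonormal basis forces $j\mapsto L(i,j)$ and $i\mapsto L(i,j)$ to be permutations (orthogonal unit vectors proportional to basis vectors must hit distinct basis vectors), while $\xi_{1j}=v_j$ gives $L(1,j)=j$. Conversely any triple $(v,L,\tau)$ with $v\in(S^{N-1}_\mathbb C)^{N,\perp}$, $L\in L_N$ and $\tau\in\mathbb T^{N(N-1)}$ (the first-row phases normalized to $1$, whence the count $N^2-N$) reconstructs such a magic basis, and the two assignments are mutually inverse. This yields $\widetilde X_N^\circ=(S^{N-1}_\mathbb C)^{N,\perp}\times L_N\times\mathbb T^{N(N-1)}$; passing to the projections $u_{ij}=Proj(\xi_{ij})=Proj(v_{L(i,j)})$ kills exactly the torus factor and replaces $(S^{N-1}_\mathbb C)^{N,\perp}$ by $(P^{N-1}_\mathbb C)^{N,\perp}$, giving $X_N^\circ=(P^{N-1}_\mathbb C)^{N,\perp}\times L_N$.

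For stationarity I would compute $(tr\otimes\int_{X_N^\circ})\pi_N^\circ$ directly, equipping $X_N^\circ$ with the product of the $U_N$-invariant measure on the frame factor and the uniform measure on $L_N$. At a point $(P,L)$ with $P_m=Proj(v_m)$, the entries $u_{ij}=P_{L(i,j)}$ are simultaneously diagonalized by $v_1,\dots,v_N$, so the evaluation of $\pi_N^\circ$ there is the diagonal representation $\bigoplus_m\mathrm{ev}_{\sigma_m}$, where $\sigma_m\in S_N$ is determined by $L(\sigma_m(j),j)=m$ (using $w_{ij}\leftrightarrow\chi(\sigma\,|\,\sigma(j)=i)$). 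Hence for $f\in C(S_N)$ one gets $tr(\pi_N^\circ(f))=\frac1N\sum_m f(\sigma_m)$, which is independent of the frame $P$, so integration leaves
$$\Big(tr\otimes\int_{X_N^\circ}\Big)\pi_N^\circ(f)=\frac{1}{N\,|L_N|}\sum_{L\in L_N}\sum_{m=1}^N f(\sigma_m(L)).$$

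It then remains to prove the combinatorial balance statement: the multiset $\{\sigma_m(L):L\in L_N,\ 1\le m\le N\}$ covers each element of $S_N$ equally often, so that the displayed functional equals $\frac1{N!}\sum_\sigma f(\sigma)=\int_{S_N}f$, which is exactly the stationarity of $\pi_N^\circ$ (its faithfulness being then automatic, by the remark following Definition 1.6). I expect this balance to be the main obstacle. I would prove it by passing to \emph{all} Latin squares: permuting the symbols by $\rho$ sends $\sigma_m\mapsto\sigma_{\rho^{-1}(m)}$, so it only reshuffles the family $\{\sigma_m\}_m$ and leaves the multiset unchanged, while each half-normalized class arises from exactly $N!$ Latin squares; hence balance over $L_N$ is equivalent to balance over all Latin squares. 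There, permuting the rows by $\alpha$ sends the associated permutations $\sigma_m\mapsto\alpha\sigma_m$ and is a bijection of the indexing set, so the fibre over $\sigma$ and the fibre over $\alpha\sigma$ have equal size; as $\alpha$ is arbitrary, all fibres coincide in size, which is the balance. The chief subtlety is organizing this symmetry argument cleanly, i.e.\ viewing a Latin square as a sharply transitive set of permutations and tracking how the symbol- and row-permutations act on it; the analytic step (independence of the frame factor) is then routine.
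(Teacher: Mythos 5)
Your proof is correct and follows essentially the same route as the paper: the same first-row-plus-Latin-square decomposition for the two identifications, and the same symmetry idea (the symmetric group acting on Latin squares) for stationarity. The only real difference is rigor: where the paper disposes of stationarity in one sentence (``the symmetric group acts on $L_N$, and so the random matrix trace on $C(S_N)$ is invariant under permutations''), you make this precise by identifying the fiber representation at a point $(P,L)$ as $\bigoplus_m\mathrm{ev}_{\sigma_m}$ with $L(\sigma_m(j),j)=m$, and by proving equidistribution of the multiset $\{\sigma_m(L)\}$ via passage from half-normalized squares to all Latin squares --- which neatly handles the subtlety, glossed over in the paper, that row permutations do not preserve half-normalization. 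This filling-in of the paper's terse invariance claim is the main added value of your write-up.
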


\begin{proof}
Consider a flat magic unitary $u\in X_N$, and let $(u_1,\ldots,u_N)\in(P^{N-1}_\mathbb C)^{N,\perp}$ be its first row. Since the condition $u\in X_N^\circ$ tells us that we must have $u_{ij}=u_{L_{ij}}$, for a certain half-normalized Latin square $L$, this gives the first assertion. 

The second assertion, regarding $\widetilde{X}_N^\circ$, follows by taking the affine lift, with $\mathbb T^{N(N-1)}$ being the space of phases for the entries in the remaining rows.

Finally, let us endow $X_N^\circ$ with the homogeneous space measure on $(P^{N-1}_\mathbb C)^{N,\perp}$ times the counting measure on $L_N$. By invariance, it is enough to fix an arbitrary point $u\in (P^{N-1}_\mathbb C)^{N,\perp}$, and prove that the corresponding fiber $\pi^u:C(S_N)\to M_N(C(L_N))$ is stationary. But this follows from the fact that the symmetric group acts on $L_N$, and so the random matrix trace on $C(S_N)$ is invariant under permutations.
\end{proof}

\section{Quantum subgroups}

In this section we discuss the general case, where $G\subset S_N^+$ is an arbitrary closed quantum subgroup. We use here Tannakian duality, and more specifically, the following result:

\begin{proposition}
Given an inclusion $G\subset S_N^+$, with the corresponding fundamental corepresentations denoted $u\to w$, we have the following formula:
$$C(G)=C(S_N^+)\Big/\Big(T\in Hom(u^{\otimes k},u^{\otimes l}),\forall k,l\in\mathbb N,\forall T\in Hom(w^{\otimes k},w^{\otimes l})\Big)$$
with the Hom-spaces at left being taken in a formal sense.
\end{proposition}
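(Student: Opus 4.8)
The plan is to identify the right-hand side algebra, call it $A$, with $C(G)$ by means of Woronowicz's Tannaka--Krein duality, in the simplified form available from \cite{mal}. Throughout, write $C_w=(Hom(w^{\otimes k},w^{\otimes l}))_{k,l}$ for the Tannakian category of $G$, regarded as a collection of linear maps between tensor powers of $\mathbb C^N$, and recall that it is a bona fide Tannakian category precisely because it is the intertwiner category of the quantum group $G$, hence closed under composition, tensor product, involution, and containing the identities and duality maps.

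First I would check that the defining relations of $A$ cut out a Hopf ideal of $C(S_N^+)$. For a fixed $T\in Hom(w^{\otimes k},w^{\otimes l})$, the defining relation $Tu^{\otimes k}=u^{\otimes l}T$, written out on the matrix entries $u_{ij}$, is preserved by $\Delta,\varepsilon,S$: this is immediate from $\Delta(u)=u\otimes u$ in leg notation, together with the fact that intertwiners compose and tensor in the expected way. Hence $A$ is a Woronowicz algebra, and we may write $A=C(\widetilde G)$ for some closed quantum subgroup $\widetilde G\subset S_N^+$.

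Next I would establish the inclusion $G\subset\widetilde G$, and read it at the categorical level. Since $w_{ij}$ is by definition the image of $u_{ij}$ under the surjection $C(S_N^+)\to C(G)$, each $T\in Hom(w^{\otimes k},w^{\otimes l})$ already satisfies $Tw^{\otimes k}=w^{\otimes l}T$ in $C(G)$. Therefore the ideal defining $A$ lies in the kernel of $C(S_N^+)\to C(G)$, and the quotient map factors as $C(S_N^+)\to A\to C(G)$. This gives $G\subset\widetilde G$, and hence $C_{\widetilde G}\subset C_G=C_w$, the smaller quantum group carrying the larger intertwiner category. On the other hand, by the very construction of $A$ we have $C_w\subset C_{\widetilde G}$. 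Combining the two inclusions yields $C_{\widetilde G}=C_w$.

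The main step is then to invoke the reconstruction part of Tannaka--Krein duality: among closed quantum subgroups of $S_N^+$ carrying the induced fundamental corepresentation, the assignment $H\mapsto C_H$ is injective, so $C_{\widetilde G}=C_w$ forces $\widetilde G=G$, i.e. $A=C(G)$, as desired. The one genuinely nontrivial ingredient is precisely this reconstruction theorem --- in particular the statement that the intertwiner spaces of the quantum group reconstructed from a Tannakian category are exactly that category and nothing larger; all the remaining steps are bookkeeping. I expect the main obstacle to be the careful handling of the \emph{formal} Hom-spaces $Hom(u^{\otimes k},u^{\otimes l})$ on the left, namely interpreting the relations $Tu^{\otimes k}=u^{\otimes l}T$ purely as relations among the generators $u_{ij}$ of $C(S_N^+)$, so that Woronowicz's reconstruction result applies verbatim once one has observed that $C_w$ is closed under the categorical operations.
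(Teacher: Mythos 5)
Your argument is correct, and it ultimately rests on the same pillar as the paper's proof, namely Woronowicz's Tannaka--Krein duality (the paper simply observes that the formal relations generate a well-defined ideal and then cites \cite{wo2}, with \cite{mal} for a modern proof). What you do differently is decompose the citation into a soft part and a hard part, and your decomposition is a genuine refinement. The two inclusions $C_w\subset C_{\widetilde G}$ (by construction of $A$) and $C_{\widetilde G}\subset C_G=C_w$ (from the factorization $C(S_N^+)\to A\to C(G)$, i.e.\ $G\subset\widetilde G$) give $C_{\widetilde G}=C_w$ with no Tannakian input at all; after that, the only nontrivial ingredient is the injectivity statement, that a closed quantum subgroup of $S_N^+$ is determined by its intertwiner spaces. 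That statement is the Peter--Weyl half of the duality: minimal projections in the common commutants $End(w^{\otimes k})$ identify the irreducible subrepresentations of the two quantum groups, partial isometries in the common spaces $Hom(w^{\otimes k},w^{\otimes l})$ identify equivalences, and since $w=\bar{w}$ for subgroups of $S_N^+$ every irreducible occurs in some $w^{\otimes k}$, so the surjection from the dense Hopf $*$-algebra of $\widetilde G$ onto that of $G$ is injective. So your route isolates exactly what is needed, whereas the paper invokes the full duality statement, which is essentially the proposition itself.

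One slip worth correcting: in your closing sentence you identify the ``genuinely nontrivial ingredient'' as the statement that the intertwiner spaces of the quantum group reconstructed from a Tannakian category are exactly that category and nothing larger. But that exactness statement (hard only for abstract categories) is precisely what your sandwich argument renders unnecessary here, since $C_w$ is already realized as the category of the quantum group $G$; what your proof actually uses is the injectivity of $H\mapsto C_H$, a different and easier half of the duality. The mislabeling does not affect the validity of the argument, but the attribution should be fixed.
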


\begin{proof}
We recall that for a corepresentation $v=(v_{ij})$, the condition $T\in Hom(v^{\otimes k},v^{\otimes l})$ means that we have $Tv^{\otimes k}=v^{\otimes l}T$, the tensor powers being given by $v^{\otimes r}=(v_{i_1\ldots i_r,j_1\ldots j_r})$. We can formally use these notions for any square matrix over any $C^*$-algebra, and in particular, for the fundamental corepresentation of $C(S_N^+)$. Thus, the collection of relations $T\in Hom(u^{\otimes k},u^{\otimes l})$, one for each choice of an intertwiner $T\in Hom(w^{\otimes k},w^{\otimes l})$, produce a certain ideal of $C(S_N^+)$, and so the quotient algebra in the statement is well-defined.

This latter algebra is isomorphic to $C(G)$, due to Woronowicz's Tannakian results in \cite{wo2}. For a short, recent proof here, using basic Hopf algebra theory, see \cite{mal}.
\end{proof}

Now back to our matrix model questions, given $G\subset S_N^+$, the idea is that of constructing a universal model space $X_G\subset X_N$, by using the above Tannakian equations:

\begin{theorem}
Given a closed subgroup $G\subset S_N^+$, there is a universal flat model
$$\pi_G:C(G)\to M_N(C(X_G))\quad:\quad w_{ij}\to[u\to u_{ij}]$$
with $X_G\subset X_N$ being obtained by using the Tannakian relations for $G$.
\end{theorem}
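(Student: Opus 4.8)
The plan is to carve out the model space $X_G$ inside $X_N$ as the locus where the Tannakian relations defining $G$ hold pointwise, and then to check that the universal flat model $\pi_N$ of $C(S_N^+)$ from Definition 5.4 descends along the quotient map $C(S_N^+)\to C(G)$ once its target space is cut down to $X_G$.

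First I would record that a point $u=(u_{ij})\in X_N$ is precisely a flat magic unitary over $M_N(\mathbb C)$, so by the universal property of $C(S_N^+)$ it determines an evaluation representation $\rho_u:C(S_N^+)\to M_N(\mathbb C)$ with $\rho_u(w_{ij})=u_{ij}$; concretely $\rho_u=\mathrm{ev}_u\circ\pi_N$. Writing $u^{\otimes r}$ for the matrix with entries $(u^{\otimes r})_{i_1\ldots i_r,j_1\ldots j_r}=u_{i_1j_1}\cdots u_{i_rj_r}$, I would then set
$$X_G=\left\{u\in X_N:\ Tu^{\otimes k}=u^{\otimes l}T,\ \forall k,l,\ \forall T\in Hom(w^{\otimes k},w^{\otimes l})\right\},$$
where the Hom-spaces are the intertwiner spaces of $G$. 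By Proposition 6.1 this is exactly the requirement that $\rho_u$ factor through $C(G)$. Since each identity $Tu^{\otimes k}=u^{\otimes l}T$ is a continuous, indeed polynomial, condition on the entries of $u$, the set $X_G$ is an intersection of closed subsets of the compact space $X_N$, and hence is itself a compact space.

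It then remains to descend the model. The defining ideal of $C(G)$ inside $C(S_N^+)$ is generated by the entries of the elements $Tw^{\otimes k}-w^{\otimes l}T$, as $T$ runs over the intertwiners of $G$. Applying $\pi_N$ and composing with the restriction homomorphism $C(X_N)\to C(X_G)$, each such generator becomes the function $u\mapsto(Tu^{\otimes k}-u^{\otimes l}T)_{ab}$, which vanishes identically on $X_G$ by construction. Hence this composite morphism $C(S_N^+)\to M_N(C(X_G))$ kills the ideal, and therefore factors through $C(G)$, yielding the desired morphism $\pi_G:C(G)\to M_N(C(X_G))$ with $w_{ij}\mapsto[u\mapsto u_{ij}]$. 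Flatness is automatic, since at every point $u\in X_G\subset X_N$ the entries $u_{ij}$ are rank $1$ projections, and universality among the flat models for $C(G)$ is then formal, exactly as for $C(S_N^+)$ in Definition 5.4: any flat model $C(G)\to M_N(C(Y))$ has fibres that are flat magic unitaries factoring through $C(G)$, hence define a map $Y\to X_G$ through which the model factors.

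The step I expect to require the most care is the translation invoked in the second paragraph: Proposition 6.1 states the Tannakian relations ``in a formal sense'' at the level of the free object $C(S_N^+)$, and I must make sure these agree, point by point over $X_N$, with the honest numerical identities $Tu^{\otimes k}=u^{\otimes l}T$ used to cut out $X_G$. Once this matching is pinned down, the closedness of $X_G$ and the factorization through $C(G)$ follow cleanly from the universal properties, and the rest is a routine unwinding of definitions.
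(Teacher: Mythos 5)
Your proposal is correct and takes essentially the same approach as the paper: the paper defines $C(X_G)$ as the quotient of $C(X_N)$ by the formal Tannakian relations and invokes Gelfand duality to obtain $X_G\subset X_N$, which is exactly the Gelfand-dual formulation of your pointwise zero-locus definition of $X_G$. Your explicit checks (that the restriction of $\pi_N$ kills the defining ideal of $C(G)$, and that universality follows by evaluating an arbitrary flat model at points of $Y$) merely fill in the verifications the paper compresses into ``by definition''.
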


\begin{proof}
This follows by using Proposition 6.1. Indeed, in order to construct the universal flat model, we need a universal solution to the following problem:
$$\begin{matrix}
C(S_N^+)&\to&M_N(C(X_N))\\
\\
\downarrow&&\downarrow\\
\\
C(G)&\to&M_N(C(X_G))
\end{matrix}$$

According to Proposition 6.1, the solution to this latter question is given by the following construction, with the Hom-spaces at left being taken as usual, in a formal sense:
$$C(X_G)=C(X_N)\Big/\Big(T\in Hom(u^{\otimes k},u^{\otimes l}),\forall k,l\in\mathbb N,\forall T\in Hom(w^{\otimes k},w^{\otimes l})\Big)$$

To be more precise here, the quotient algebra on the right is well-defined, by Gelfand duality this algebra must be of the form $C(X_G)$, for a certain algebraic submanifold $X_G\subset X_N$, and this manifold has by definition the desired universality property.
\end{proof}

As a basic example, for the quantum groups $G=S_N,S_N^+$ we recover the spaces $X_N^\circ,X_N$ from Definition 5.4, and the representations $\pi_N^\circ,\pi_N$. In general, the above construction remains of course quite theoretical. Observe for instance that if $G\subset S_N^+$ has the property that $u_{ij}=0$ for a certain pair $(i,j)$, then $X_G\subset X_N$ will collapse to the null space.

Our construction, while definitely still in need of some improvements, provides however a framework for an extension of the conjectures in \cite{bne}. We have here:

\begin{conjecture}
Assuming that $G\subset S_N^+$ has suitable uniformity properties:
\begin{enumerate}
\item The universal flat representation $\pi_G$ is inner faithful.

\item If the dual quantum group $\widehat{G}$ is amenable, $\pi_G$ is stationary.
\end{enumerate}
\end{conjecture}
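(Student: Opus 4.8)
The plan is to treat the two assertions separately, reducing (1) to a Tannakian statement about which intertwiners are detected by the model, and reducing (2) to the idempotency criterion $T_e^2=T_e$ of Theorem 1.7 (3), with the amenability hypothesis entering exactly as in the remark following Definition 1.6, to upgrade ``stationary on its image'' to genuine stationarity. The unspecified ``uniformity properties'' must, heuristically, guarantee both that $X_G$ does not degenerate (as in the example $u_{ij}=0$ forcing $X_G$ to collapse) and that it carries enough symmetry to run the computation of (2); so a first task is to isolate the correct such hypothesis.

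For (1), recall from Theorem 6.2 that $X_G\subset X_N$ is cut out by imposing $T\in Hom(u^{\otimes k},u^{\otimes l})$ for every $T\in Hom(w^{\otimes k},w^{\otimes l})$. Thus $\pi_G$ automatically satisfies every intertwining relation of $G$, and the Hopf image $H\subset G$ has a Tannakian category containing that of $G$. Inner faithfulness means $H=G$, i.e. no \emph{extra} intertwiners are forced by the model. Concretely, I would try to prove that if a linear map $T$ satisfies $Tu^{\otimes k}=u^{\otimes l}T$ for all flat magic unitaries $u\in X_G$, then already $T\in Hom(w^{\otimes k},w^{\otimes l})$ in $C(G)$. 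The natural route is to show that the evaluations of $\pi_G$ at the points of $X_G$ jointly detect precisely the category of $G$, by exhibiting, for any $T$ outside $Hom(w^{\otimes k},w^{\otimes l})$, an explicit point $u\in X_G$ witnessing $Tu^{\otimes k}\neq u^{\otimes l}T$; this is where a non-degeneracy (uniformity) hypothesis on $X_G$ is indispensable.

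For (2), once (1) gives inner faithfulness, ``stationary on its image'' coincides with ``stationary'', so by Theorem 1.7 it suffices to establish $T_e^2=T_e$. Since the points of $X_G$ are flat magic unitaries $u_{ij}=Proj(\xi_{ij})$, the trace formula of Proposition 4.1 applies verbatim, and I would compute $T_e$ as an integral over $X_G$ of products of scalar products $\langle\xi,\xi'\rangle$, generalizing Proposition 4.2. Converting this into a genuine Weingarten-type formula, via the Fourier transform tricks already flagged at the end of Section 4, seems the cleanest way to organize the subsequent idempotency check. The amenability of $\widehat{G}$ is then used precisely as after Definition 1.6: it makes the image co-amenable, so that $\int_G=\psi$ follows from $\psi*\psi=\psi$ and $\pi_G$ is faithful.

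The hard part is the idempotency step of (2). In the Weyl case, Theorem 4.3 collapsed the doubled integral $I$ back to a single copy of $T_p$ through the substitution $S_{ia}=UW_{ia}V$, which is Haar-distributed when $U,V$ are, together with the absorption identity $(W_{ia}^*AW_{ia})_{bc}=\langle i,c-b\rangle A_{a+b,a+c}$; both rested on the transitive group structure carried by the model. For a general $G\subset S_N^+$ the space $X_G$ possesses no such multiplicative symmetry, so this absorption has no direct analogue, and finding its replacement is the crux of the conjecture. I expect that the correct meaning of ``uniformity'' is exactly the existence of a (quantum) group acting transitively on the fibers of $X_G$ over a suitable base, leaving $tr\otimes\int$ invariant, under which the doubled Weingarten sum closes up; producing and exploiting such an action is the main obstacle, and is likely why the statement remains conjectural.
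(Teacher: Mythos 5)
The statement you were given is Conjecture 6.3 of the paper: the paper offers no proof of it, and everything following it in the text is a discussion of obstacles, not an argument. So there is nothing to compare your proposal against in the strict sense; the real question is whether your proposal closes the conjecture, and it does not. It is a research plan in which the two decisive steps are named but not performed, and each of those steps \emph{is} the conjecture. For (1), your reduction via Proposition 6.1 is correct as far as it goes, but the step ``exhibit, for any $T\notin Hom(w^{\otimes k},w^{\otimes l})$, a point $u\in X_G$ with $Tu^{\otimes k}\neq u^{\otimes l}T$'' is precisely what inner faithfulness of $\pi_G$ means; you give no construction of such a point, and none can exist in general, since the paper notes that for many subgroups (e.g. group duals, where some $u_{ij}=0$) the space $X_G$ collapses to the null space. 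Thus any such construction must invoke the uniformity hypothesis, which neither you nor the paper knows how to formulate -- the paper only says $S_N$ should qualify and that one should look for a property forcing $u_{ij}\neq0$; it even records that computing $X_4$ (the case $G=S_4^+$) is a long-standing open question.

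For (2), your diagnosis is accurate: the proof of Theorem 4.3 rests on the multiplicative structure of the Weyl model (the absorption identity $(W_{ia}^*AW_{ia})_{bc}=<i,c-b>A_{a+b,a+c}$ and the fact that $S_{ia}=UW_{ia}V$ is Haar-distributed), and $X_G$ has no analogue of this. But ``finding its replacement'' is again exactly the open problem, and your suggestion that uniformity should mean a transitive (quantum) group action on $X_G$ preserving $tr\otimes\int$ is an unsubstantiated guess -- a reasonable one, consistent with the paper's remark that Theorem 5.5 suggests $X_G$ might be a homogeneous space times a discrete space, but a guess nonetheless. There is also a well-posedness gap prior to any computation: the matrices $T_e$ you propose to study are integrals over $X_G$, and no measure on $X_G$ has been specified; the paper states explicitly that it is not clear how to formulate the measure needed for the stationarity statement. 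You acknowledge at the end that these obstacles are ``likely why the statement remains conjectural,'' which is honest and correct -- but it also means that, as a proof, your text establishes neither assertion.
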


Here it is of course not very clear what ``uniformity'' should mean, but, according to Theorem 5.5 above, the usual symmetric group $S_N$ should be definitely included. In general, we should look here for a property which implies $u_{ij}=0$, for any $i,j$. The difficulty comes from the fact that many interesting examples of quantum permutation groups, such as the group duals, do have this unwanted property, $u_{ij}=0$ for some $i,j$. 

It is not clear either how to formulate something more precise regarding the measure on $X_G$, which is needed at the stationarity statement. Observe however that Theorem 5.5 suggests that $X_G$ might be an homogeneous space, times a discrete space. One question here, which is open for some time already, is that of explicitely computing $X_4$.

As a conclusion now, the world of compact quantum groups seems to be very related to the world of random matrices. Among the concrete questions raised by the present work, perhaps the most important is that of enlarging the stationary model framework, as to cover the deformed Fourier models from \cite{ban}, \cite{bb2}, \cite{bic}. Of particular interest here is the question on how these deformed Fourier models appear, and this is related to some delicate questions in real algebraic geometry, around the notion of defect \cite{tzy}. In fact, understanding the algebraic geometry of the various matrix models themselves, and its relation with the associated quantum groups, looks like a quite interesting problem.


\begin{thebibliography}{99}

\bibitem{ban}T. Banica, Deformed Fourier models with formal parameters, preprint 2016.

\bibitem{bb1}T. Banica and J. Bichon, Hopf images and inner faithful representations, {\em Glasg. Math. J.} {\bf 52} (2010), 677--703.

\bibitem{bb2}T. Banica and J. Bichon, Random walk questions for linear quantum groups, {\em Int. Math. Res. Not.} {\bf 24} (2015), 13406--13436.

\bibitem{bb3}T. Banica and J. Bichon, Matrix models for noncommutative algebraic manifolds, preprint 2016.

\bibitem{bco}T. Banica and B. Collins, Integration over the Pauli quantum group, {\em J. Geom. Phys.} {\bf 58} (2008), 942--961.

\bibitem{bfs}T. Banica, U. Franz and A. Skalski, Idempotent states and the inner linearity property, {\em Bull. Pol. Acad. Sci. Math.} {\bf 60} (2012), 123--132.

\bibitem{bne}T. Banica and I. Nechita, Flat matrix models for quantum permutation groups, {\em Adv. Appl. Math.} {\bf 83} (2017), 24--46. 

\bibitem{bax}R.J. Baxter, Exactly solved models in statistical mechanics, Academic Press (1982).

\bibitem{bmt}E. B\'edos, G.J. Murphy and L. Tuset, Co-amenability of compact quantum groups, {\em J. Geom. Phys.} {\bf 40} (2001), 129--153.

\bibitem{bic}J. Bichon, Quotients and Hopf images of a smash coproduct, {\em Tsukuba J. Math.} {\bf 39} (2015), 285--310.

\bibitem{bdu}J. Bichon and M. Dubois-Violette, Half-commutative orthogonal Hopf algebras, {\em Pacific J. Math.} {\bf 263} (2013), 13--28. 

\bibitem{bcv}M. Brannan, B. Collins and R. Vergnioux, The Connes embedding property for quantum group von Neumann algebras, preprint 2014.

\bibitem{chi}A. Chirvasitu, Residually finite quantum group algebras, {\em J. Funct. Anal.} {\bf 268} (2015), 3508--3533.

\bibitem{csn}B. Collins and P. \'Sniady, Integration with respect to the Haar measure on the unitary, orthogonal and symplectic group, {\em Comm. Math. Phys.} {\bf 264} (2006), 773--795.

\bibitem{fsk}U. Franz and A. Skalski, On idempotent states on quantum groups, {\em J. Algebra} {\bf 322} (2009), 1774--1802.

\bibitem{fre}A. Freslon, On the partition approach to Schur-Weyl duality and free quantum groups, preprint 2014.

\bibitem{mvd}A. Maes and A. Van Daele, Notes on compact quantum groups, {\em Nieuw Arch. Wisk.} {\bf 16} (1998), 73--112. 

\bibitem{mal}S. Malacarne, Woronowicz's Tannaka-Krein duality and free orthogonal quantum groups, preprint 2016.

\bibitem{ntu}S. Neshveyev and L. Tuset, Compact quantum groups and their representation categories, SMF (2013).

\bibitem{rwe}S. Raum and M. Weber, The full classification of orthogonal easy quantum groups, {\em Comm. Math. Phys.} {\bf 341} (2016), 751--779.

\bibitem{sso}A. Skalski and P. So\l tan, Quantum families of invertible maps and related problems, {\em Canad. J. Math.} {\bf 68} (2016), 698--720.

\bibitem{tzy}W. Tadej and K. \.Zyczkowski, Defect of a unitary matrix, {\em Linear Algebra Appl.} {\bf 429} (2008), 447--481.

\bibitem{twe}P. Tarrago and M. Weber, Unitary easy quantum groups: the free case and the group case, preprint 2015.

\bibitem{wa1}S. Wang, Quantum symmetry groups of finite spaces, {\em Comm. Math. Phys.} {\bf 195} (1998), 195--211.

\bibitem{wa2}S. Wang, $L_p$-improving convolution operators on finite quantum groups, {\em Indiana Univ. Math. J.} {\bf 65} (2016), 1609--1637.

\bibitem{wei}D. Weingarten, Asymptotic behavior of group integrals in the limit of infinite rank, {\em J. Math. Phys.} {\bf 19} (1978), 999--1001.

\bibitem{wo1}S.L. Woronowicz, Compact matrix pseudogroups, {\em Comm. Math. Phys.} {\bf 111} (1987), 613--665.

\bibitem{wo2}S.L. Woronowicz, Tannaka-Krein duality for compact matrix pseudogroups. Twisted SU(N) groups, {\em Invent. Math.} {\bf 93} (1988), 35--76.

\end{thebibliography}
\end{document}